\newtheorem{theorem}{Theorem}[section]
\def \S {{\cal S}}
\def \A {{\cal A}}
\def \U {{\cal U}}
\def \B {{\cal B}}
\def \deg {{\rm deg}}
\newtheorem{conjecture}[theorem]{Conjecture}
\newtheorem{corollary}[theorem]{Corollary}
\newtheorem{proposition}[theorem]{Proposition}
\newtheorem{definition}[theorem]{Definition}
\newtheorem{acknowledgements}[theorem]{Acknowledgements}
\begin{document}

\title{Composition of permutation representations of triangle groups}
\author{ Siddiqua Mazhar} 
 
\date{}
\maketitle 

\begin{itemize}

\item[] Department of Mathematics and Statistics, Newcastle University, \\United Kingdom. \thanks{mazharsiddiqua@gmail.com}     
\end{itemize}

%\vskip20pt 

\begin{abstract}
A triangle group is denoted by $\Delta(p,q,r)$ and has finite presentation
$$ \Delta(p,q,r)=\langle x,y | x^p=y^q=(xy)^r=1 \rangle .$$
We examine a method for composition of permutation representations of 
a triangle group $\Delta(p,q,r)$ that was used in Everitt's proof of Higman's
1968 conjecture that every Fuchsian group has amongst its homomorphic images all
but finitely many alternating groups.
We see that some of these compositions must give imprimitive representations,
and in particular situations, where the representations being composed are
all equivalent copies of an alternating group in the same degree, we can give a 
complete description of the structure of the composition of the representations.
This article contains the main results of the author's PhD thesis.
\end{abstract}

\vskip20pt
\noindent
{\it Mathematics Subject Classifications:} 20XX, 05C25. 
%20XX, 20BXX.

\noindent
 {\it Keywords:} Triangle groups, coset diagram,  Imprimitive group.

\section{Introduction}
We examine a method for composition of permutation representations of 
a triangle group $\Delta(p,q,r)$ that was used in the proof \cite{Everitt2} of Higman's
conjecture, that every Fuchsian group has amongst its homomorphic images all
but finitely many alternating groups.
We see that some of these compositions must give imprimitive representations,
and in particular situations, where the representations being composed are
all equivalent copies of an alternating group in the same degree, we can give a 
complete description of the structure of the composition of the representations.
This article contains the main results of the PhD thesis \cite{Siddiqua}.

This first section is introductory, and the second section contains definitions and notation. Section~\ref{sec:Comp} describes in Theorem~\ref{thm:comp}
the construction that we use for composing representations, and Section~\ref{sec:ImprimComp} in Theorems~\ref{thm:imprim_t=p}, \ref{thm:imprim_t=1}, \ref{thm:imprim_comp}
three variants of this that give imprimitive representations. Finally in Section~\ref{sec:Alt}, Theorems~\ref{thm:ImprimCompAlt1} and ~\ref{thm:ImprimCompAlt2}  examines the structure of the composed representations when they are build out of equivalent 
representations, that are all alternating groups in the same degree.

\section{Definitions and notations}
We define the triangle group $\Delta(p,q,r)$ to be the group
\[ \Delta(p,q,r) = \langle x,y \mid x^p=y^q=(xy)^r=1 \rangle. \]
We refer to \cite{Cameron} for standard definitions and notation for permutation groups.
In particular, we shall use the following notation.
$\S(\Omega)$ will denote the group of all permutations on a set $\Omega$ and
$\A(\Omega)$ the subgroup of all even permutations (acting on the right). Further, 
$\S_n$ and $\A_n$ will denote the groups of all permutations and even permutations, respectively, of the set $\{1,2,\ldots,n\}$. We use the notation $x^y$ to denote the conjugate $y^{-1}xy$, as is consistent for right actions. 

This article studies transitive permutation representations of triangle groups on
finite sets $\Omega$, and in particular the composition of representations described in \cite{Everitt2}. It is often convenient to view such representations graphically, using \textbf{coset diagrams}.
We define a coset diagram for a triangle group $G=\Delta(p,q,r)$ to be a graphical description of
a permutation representation of $G$ on a set $\Omega$, in which the elements of $\Delta$ are
represented as vertices, and directed edges labelled $x$ and $y$ join a vertex
$g$ to its images under $x$ and $y$.

Where a permutation group $H$ acts on set $\Omega$, we recall the definition of 
a \textbf{block} for $H$ as a subset $X$ of $\Omega$ for which either $X^h=X$ 
or $X^h \cap X=\emptyset$, for each $h \in H$, and call the action of $H$
\textbf{primitive} if $\emptyset$ and $\Omega$ are the only blocks,
\textbf{imprimitive} otherwise. 
For an imprimitive action, the partition of $\Omega$ into a set of blocks is 
called a \textbf{block system}.

The following result is important for the results of this paper. It is found as (essentially) \cite[Theorem 1.8]{Cameron},
which refers to \cite{Suprunenko} for a proof. 
%The proof we give is due to \cite{HoltPrivate}  

\begin{proposition} \label{lemma:ImprimWreathProduct}
Let $H \subseteq \S(\U)$ act transitively and imprimitively on a set $\U$ 
with block system $\B = \{ B_1, \ldots, B_\deg\}$,
and let $\Omega = \{1, \ldots, \deg\}$. 

Let $J_1,\ldots,J_\deg \subseteq H$ be the setwise stabilisers of the blocks
$B_1,\ldots,B_\deg$ and let $K_1,\ldots,K_\deg$ be the pointwise stabilisers of
those blocks.  Let $\psi : H \rightarrow \S_\deg$ define
the action of $H$ on the block system $\B$ and 
let $N=\cap_{i=1}^\deg J_i$ be the kernel of $\psi$.
  
Let $Q_i \subseteq \S(\U)$ be the group of permutations of $B_i$ defined
by the action of $J_i$ on $B_i$ (so $Q_i \cong J_i/K_i$),
and fixing all points of $\U \setminus B_i$,
and let $P_i \subseteq Q_i \subseteq  \S(\U)$ be the
group of permutations of $B_i$ defined by the action of $N$ on $B_i$
(so $P_i \cong N/N \cap K_i$).

Then the groups $Q_i$ are all isomorphic to a single group $Q$, and pairwise
commute. Further
$H$ is isomorphic to a subgroup of
\[ Q_1Q_2\cdots Q_\deg \rtimes \psi(H) 
\cong Q^\deg \rtimes \psi(H),\] that is, $H$ is isomorphic to a subgroup of
$H \subseteq Q \wr \psi(H)$,
and  $N$ is isomorphic to a subgroup of $P_1\cdots P_\deg$.
\end{proposition}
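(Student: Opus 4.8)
The plan is to establish the wreath product embedding in the classical way, by exhibiting $H$ concretely inside $Q \wr \psi(H)$ and reading off the image of $N$.

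First I would set up coordinates on $\U$. Since $H$ acts transitively on the block system, the blocks $B_1,\ldots,B_{\deg}$ all have the same size, and the action of $J_i$ on $B_i$ gives the group $Q_i$; I would first check that the $Q_i$ are all isomorphic to a single abstract group $Q$. This follows because $H$ is transitive on $\B$: if $h \in H$ maps $B_i$ to $B_j$ then conjugation by $h$ sends $J_i$ to $J_j$ and $K_i$ to $K_j$, hence induces an isomorphism $Q_i \cong Q_j$. Next I would verify that the $Q_i$ pairwise commute inside $\S(\U)$: each $Q_i$ is supported on $B_i$ and fixes $\U \setminus B_i$ pointwise, and the $B_i$ are pairwise disjoint, so elements of $Q_i$ and $Q_j$ for $i \neq j$ have disjoint supports and therefore commute. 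Consequently the product $Q_1 Q_2 \cdots Q_{\deg}$ inside $\S(\U)$ is actually a direct product, isomorphic to $Q^{\deg}$.

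The heart of the argument is the embedding. I would argue that $N = \cap_i J_i$ acts on $\U$ preserving each block, so the restriction of $N$ to each $B_i$ lands in $Q_i$; the diagonal-type map $n \mapsto (n|_{B_1}, \ldots, n|_{B_{\deg}})$ embeds $N$ into $Q_1 \cdots Q_{\deg}$, with image inside $P_1 \cdots P_{\deg}$ by the very definition of $P_i$ as the image of $N$ on $B_i$. (Note this map is injective because $N$ fixes no nontrivial structure outside the blocks — if $n$ restricts to the identity on every $B_i$ then $n$ is the identity on $\U$.) For general $h \in H$, the permutation $\psi(h)$ records how the blocks are permuted; choosing once and for all coset representatives $t_1,\ldots,t_{\deg} \in H$ with $B_1^{t_i} = B_i$ lets me write any $h$ as a product of a block-permuting part and a part preserving every block, exactly as in the standard imprimitive-action-to-wreath-product argument (\cite[Theorem 1.8]{Cameron}, \cite{Suprunenko}). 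Tracking the restriction to each block then exhibits $h$ as an element $((q_1,\ldots,q_{\deg}), \psi(h))$ of $Q \wr \psi(H) \cong Q^{\deg} \rtimes \psi(H)$, and one checks this assignment is a homomorphism and is injective because $H \subseteq \S(\U)$ acts faithfully on $\U$. This simultaneously identifies $Q_1 \cdots Q_{\deg} \rtimes \psi(H)$ with the relevant subgroup of the wreath product.

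The main obstacle I anticipate is bookkeeping rather than conceptual: making the choice of coset representatives $t_i$ explicit and verifying the cocycle-style identity that makes $h \mapsto ((q_1,\ldots,q_{\deg}),\psi(h))$ multiplicative, i.e. checking that the $B_i$-restriction of a product $hh'$ decomposes correctly in terms of the restrictions of $h$ and $h'$ after relabelling blocks via $\psi(h)$. This is precisely the content of the cited \cite[Theorem 1.8]{Cameron}, so I would invoke that theorem for the embedding $H \hookrightarrow Q \wr \psi(H)$ and devote the written proof mainly to the two statements not entirely covered by that citation in this form: that the $Q_i$ are mutually isomorphic and pairwise commuting (hence $Q_1\cdots Q_{\deg} \cong Q^{\deg}$), and that $N$ embeds into $P_1 \cdots P_{\deg}$. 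Both of the latter reduce to the disjoint-support observation and the definitions of $J_i$, $K_i$, $N$, $P_i$, $Q_i$ already laid out in the statement.
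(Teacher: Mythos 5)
The paper offers no proof of this proposition at all: it records the statement as ``(essentially) \cite[Theorem 1.8]{Cameron}'', which in turn refers to \cite{Suprunenko} for a proof. Your sketch is the standard argument behind that citation --- mutual isomorphism of the $Q_i$ via conjugation by elements of $H$ moving $B_i$ to $B_j$, commutativity from disjoint supports, injectivity of the restriction map on $N$ because the $B_i$ partition $\U$ and $H$ acts faithfully, and the coset-representative embedding into $Q\wr\psi(H)$ --- and it is correct, so you are taking essentially the same route the paper (implicitly, via its citation) relies on.
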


\section{Composition}
\label{sec:Comp}

We describe a method to compose $t\leq p$ coset diagrams for a triangle group $\Delta(p,q,r)$ to get a transitive diagram. 

\begin{definition}
In an arbitrary permutation representation $\pi$ of $G$, two points $a$ and $b$, which are fixed by $\pi(x)$ and such that $(\pi(x)\pi(y))^k$ map $a$ to $b$, form a \textbf{$k$-handle} and are denoted by $[a,b]_k$ \cite{Conder2}. 
\end{definition}

The construction in the theorem below is described in
Proposition 3.1 in \cite{Everitt2}. We provide a new (algebraic) proof of the correctness of this construction.

\begin{theorem}\label{thm:comp}
Let $G=\Delta(p,q,r)$ be a triangle group, and 
let $t \leq p$ be a positive integer. 
Suppose that $G$ has transitive permutation
representations $\pi_1,\ldots,\pi_t$ on distinct, disjoint, finite sets $\Omega_1,\ldots,\Omega_t$,
and that $[a_1,b_1],\ldots,[a_p,b_p]$ are all $k$-handles, with $[a_i,b_i]$
a handle in $\Omega_{j_i}$. If $j_i=j_{i'}$ for for some $i \neq i'$, then
suppose that the handles $[a_i,b_i]$ and $[a_{i'},b_{i'}]$ are disjoint.
Now define permutations $\phi_x$, $\phi_y$ of the disjoint union $\U$ of $\Omega_1,\ldots,\Omega_t$
via
\begin{eqnarray*}
\phi_x &=& \pi_1(x)\cdots \pi_t(x) \circ (a_1,\cdots,a_p)(b_b,b_{p-1},\cdots, b_1),\\
\phi_y &=& \pi_1(y)\cdots \pi_t(y). \\
\end{eqnarray*}
Then $\phi_x,\phi_y$ are the images of $x$ and $y$ for a transitive permutation representation $\phi$ of $G$ on $\U$.
\end{theorem}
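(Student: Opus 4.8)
I would show that $\phi_x,\phi_y$ satisfy the three defining relations of $G=\Delta(p,q,r)$ and that $\langle\phi_x,\phi_y\rangle$ is transitive on $\U$. Write $\rho=\pi_1(x)\cdots\pi_t(x)$ and $\rho'=\pi_1(y)\cdots\pi_t(y)$ for the two ``untwisted'' permutations of $\U$ (each $\pi_j(z)$ being viewed as a permutation of $\U$ fixing $\U\setminus\Omega_j$ pointwise), put $\sigma=(a_1,\dots,a_p)(b_p,\dots,b_1)$, and set $\tau_j=\pi_j(x)\pi_j(y)$ and $\tau=\tau_1\cdots\tau_t$; thus $\phi_x=\rho\sigma$, $\phi_y=\rho'$ and $\tau^{r}=1$. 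Since each $a_i,b_i$ is fixed by $\pi_{j_i}(x)$, all $2p$ handle points are fixed by $\rho$, and as they are pairwise distinct (handles inside a common $\Omega_j$ are disjoint, handles in different $\Omega_j$ trivially so) the supports of $\rho$ and of $\sigma$ are disjoint, so $\rho$ and $\sigma$ commute. Hence $\phi_x^{\,p}=\rho^{p}\sigma^{p}=1$, because $\rho^{p}=\prod_j\pi_j(x)^{p}=1$ and $\sigma$ is a product of two disjoint $p$-cycles; and $\phi_y^{\,q}=\prod_j\pi_j(y)^{q}=1$.

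The substance of the theorem is the relation $(\phi_x\phi_y)^{r}=1$. Using $\rho\sigma=\sigma\rho$ and that the handle points are fixed by the relevant $\pi_j(x)$, one checks that $\phi_x\phi_y=\rho\sigma\rho'=\sigma(\rho\rho')=\sigma\tau$; since $b_i=\tau^{k}(a_i)$ and the $b$-cycle of $\sigma$ is the reverse of its $a$-cycle, this may be written algebraically as $\phi_x\phi_y=\alpha\tau^{-k}\alpha^{-1}\tau^{k+1}$ with $\alpha=(a_1,\dots,a_p)$, exhibiting $\phi_x\phi_y$ as $\tau$ modified by a twist ``local to the handles''. It thus suffices to prove $(\sigma\tau)^{r}=1$, which I would do by comparing the orbits of $\sigma\tau$ (the $xy$-cycles of the composed diagram) with those of $\tau$. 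A $\tau$-orbit disjoint from the handle set is unchanged, of length dividing $r$. A $\tau$-orbit $C$ meeting the handle set is cut by those points into arcs, and $\sigma\tau$ re-splices them: on reaching a handle point $a_i$ the $\sigma\tau$-walk jumps to $a_{i+1}$ and then follows $\tau$ to the next handle point, and on reaching $b_i$ it jumps to $b_{i-1}$ and follows $\tau$ to the next handle point. The numerical point is that, in every $\tau$-orbit, the length-$k$ forward path from $a_i$ to $b_i$ is cut by the intervening handle points into sub-arcs whose lengths sum to exactly $k$, the same $k$ for every $i$ (and the forward path from $b_i$ to $a_i$ into sub-arcs of total length $|C_{j_i}|-k$). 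Tracking the re-splicing with this in hand, one sees that every $\sigma\tau$-orbit has length dividing $r$; in the generic case, where each $\tau$-orbit carries at most one handle pair, this is immediate, the $\sigma\tau$-orbit through $a_i$ having length exactly $k+(|C_{j_i}|-k)=|C_{j_i}|$, and in general it needs the more careful count mentioned below.

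For transitivity, let $\Theta$ be the $\langle\phi_x,\phi_y\rangle$-orbit of $a_1$. Since $a_i$ is fixed by $\rho$ we have $a_i^{\phi_x}=a_i^{\sigma}=a_{i+1}$, so $\Theta\supseteq\{a_1,\dots,a_p\}$; as every $\Omega_j$ contains at least one of these handle points (which is the point of composing at most $p$ diagrams), $\Theta$ meets every $\Omega_j$. Fix $j$ and pick $a_i\in\Theta\cap\Omega_j$. Away from the handle points $\phi_x$ agrees with $\pi_j(x)$ on $\Omega_j$, while $\pi_j(x)$ fixes the handle points of $\Omega_j$; together with $\phi_y|_{\Omega_j}=\pi_j(y)$ this shows that $\Theta\cap\Omega_j$ is invariant under $\langle\pi_j(x),\pi_j(y)\rangle$, hence equals $\Omega_j$ by transitivity of $\pi_j$. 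Therefore $\Theta=\U$, and $\phi$ is transitive.

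The hard part is the re-splicing count of the second paragraph when several handle pairs lie in a single $xy$-cycle of $\tau$: there the cycle type can genuinely change (for instance a single $\tau$-orbit of length $4$ carrying two $2$-handles yields $\sigma\tau$-orbits of lengths $1,1,2$), and one still has to prove divisibility by $r$. I would treat this by computing the first-return map of $\sigma\tau$ to the set of $2p$ handle points and showing, with the ``total length $k$'' identity above, that each of its cycles has length dividing $r$. The remaining ingredients — the relations $\phi_x^{\,p}=\phi_y^{\,q}=1$, the identity $\phi_x\phi_y=\sigma\tau$, the transitivity argument, and the generic case of the relation $(\phi_x\phi_y)^{r}=1$ — are routine by comparison.
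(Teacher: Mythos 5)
Your first and third paragraphs, together with the generic case of your second, are in substance the paper's own proof: the relations $\phi_x^{\,p}=\phi_y^{\,q}=1$ are dispatched the same way; your identity $\phi_x\phi_y=\sigma\tau$ and the re-splicing trace $a_i\rightarrow a_{i+1}^{\tau}\rightarrow\cdots\rightarrow b_{i+1}\rightarrow b_i^{\tau}\rightarrow\cdots\rightarrow a_i$ of length $s$ is exactly the paper's displayed computation; and your transitivity argument (the orbit of $a_1$ contains every $a_i$, hence meets every $\Omega_j$, and its intersection with $\Omega_j$ is $\langle\pi_j(x),\pi_j(y)\rangle$-invariant) is a cleaner variant of the paper's Cases 1 and 2. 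Note that both you and the paper tacitly assume that every $\Omega_j$ contains at least one of the chosen handles, which does not follow from the stated hypotheses and should be added.

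The serious issue is the ``hard part'' you isolate, and you are right that it is the crux: the paper's trace silently assumes that none of the intermediate points $a_{i+1}^{\tau},\ldots,a_{i+1}^{\tau^{k-1}},b_i^{\tau},\ldots$ is itself a handle point --- precisely your generic case --- and says nothing about the alternative. But the repair you sketch (first-return map plus the ``total length $k$'' identity) cannot be carried out, because when two $k$-handles interleave on a single cycle of $\pi_j(xy)$ the conclusion $(\phi_x\phi_y)^r=1$ can fail outright under the stated hypotheses. Concretely, let $G=\Delta(2,5,6)$, $t=1$, $p=2$, $\Omega_1=\{1,\ldots,6\}$, $\pi_1(x)=(5,6)$, $\pi_1(y)=(1,2,3,4,5)$; this is a transitive representation with $\pi_1(xy)=(1,2,3,4,5,6)$, and $[1,3]$, $[2,4]$ are disjoint $2$-handles. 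Taking $(a_1,b_1)=(1,3)$ and $(a_2,b_2)=(2,4)$, the construction yields $\phi_x=(5,6)(1,2)(4,3)$ and $\phi_y=(1,2,3,4,5)$, whence $\phi_x\phi_y=(1,3,5,6)$ has order $4$ and $4\nmid 6$. So no amount of care with the first-return map will produce divisibility by $r$; what is needed is an extra hypothesis excluding such interleaving (for instance, that distinct handles used in the composition lie in distinct cycles of $\pi_j(xy)$, as happens in the applications where each diagram contributes a single handle), and under that hypothesis your generic-case argument --- which is the paper's argument --- is already a complete proof. In short, you have correctly located the one genuinely delicate point, but it cannot be closed as you propose: it is a gap in the hypotheses of the statement itself, which the paper's own proof also passes over.
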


\begin{figure}[h] 
\centering
\includegraphics[scale = 0.5]{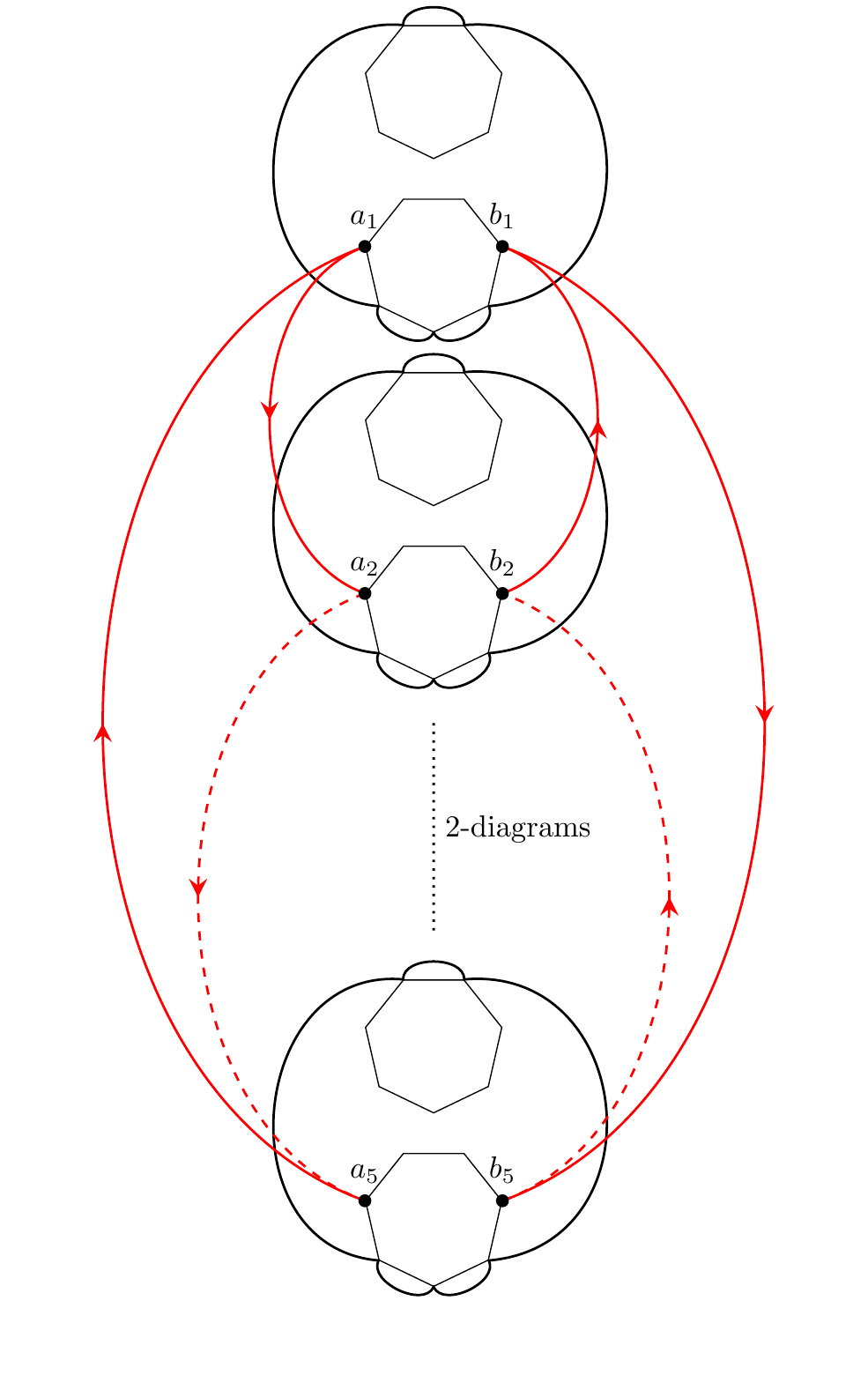}
\caption{Composition of five coset diagrams}
\label{figure:composition}
\end{figure}

\begin{proof}

We need to show that $\phi_x^{p}=1, \phi_y^{q}=1$ and $(\phi_x\phi_y)^r=1$. 
We know that $\pi_1(x)^p=1,\pi_2(x)^p=1,...,\pi_t(x)^p=1$ and $(a_1,\ldots,a_p)^p=(b_p,\ldots,b_1)^p=1$,
so clearly $\phi_x^p=1$.
Similarly $\pi_1(y)^q=\pi_2(y)^q=\ldots=\pi_t(y)^q=1$, therefore $(\phi_y)^q=1$. To finish we need to verify that $(\phi_x \phi_y)^r=1$.
Now consider the cycle of $\phi_x\phi_y$ that contains $a_i$.
Suppose that the cycle of $\pi_{j_i}(x)\pi_{j_i}(y)$ that contains $a_i$ has length $s$ such that $s|r$. Then it also contains $b_i$ and satisfies the following equations $a_i^{\pi_{j_i}(x)}=a_i$, $b_i^{\pi_{j_i}(x)}=b_i$, $a_i^{\pi_{j_i}(xy)^k}=b_i$ and $b_i^{\pi_{j_i}(xy)^{s-k}}=a_i$.
Now we have 
\begin{equation}
a_i\xrightarrow{\phi_x\phi_y}a_{i+1}^{\phi_y}\xrightarrow{(\phi_x\phi_y)^{k-1}}b_{i+1}\xrightarrow{\phi_x\phi_y}b_i^{\phi_y}\xrightarrow{(\phi_x\phi_y)^{s-(k+1)}}a_i.
\end{equation}
This implies
\begin{equation}
a_i\xrightarrow{(\phi_x\phi_y)^{1+k-1+1+s-k-1}}a_i,
\end{equation}
so,
\begin{equation}
a_i\xrightarrow{(\phi_x\phi_y)^s}a_i,
\end{equation}
and we see that $a_i$ and $b_{i+1}$ are together in a cycle of length $s$ for $\phi_x\phi_y$ that contains some points from the cycle of $\pi_{j_i}(x)\pi_{j_i}(y)$ containing $a_i$ and $b_i$ and some points from the cycle of $\pi_{j_{i+1}}(x)\pi_{j_{i+1}}(y)$ containing $a_{i+1}$ and $b_{i+1}$. We see also that a cycle of $\phi_x \phi_y$ that contains no $a_i$ has the same length as a cycle of $\pi_j(x)\pi_j(y)$ for some $j$. So $(\phi_x \phi_y)^r=1$.

Now define $\phi: G \rightarrow \S(\U)$ by $\phi(x)=\phi_x$, $\phi(y)=\phi_y$ and extending multiplicatively.
% $\phi(x)\phi(y)=\phi_x\phi_y$ such that it satisfies $\phi_x^{p}=\phi_y^{q}=(\phi_x\phi_y)^s=1$, where $s$ divides $r$.
We have proved that $\phi$ is a homomorphism, defining an action of $G$ on $\U$.

To prove that the action of $G$ on the set $\U$ defined by $\phi$ is transitive, we need to check that for all $z,z' \in \U$, $\exists g \in G$ with $z^{\phi(g)}=z'$. We have the following cases
%%%%%%%%%%%%%%%%%%%%%%%%%%%%%%%%%%%%%%

$Case$ $1$: If $z,z' \in \Omega_i$ are in the same subset, then $\exists g \in G$ such that $z^{\pi_i(g)}=z'$, because $G$ is transitive on the set $\Omega_i$.
We can write \[g=x^{i_1}y^{j_1}x^{i_2}y^{j_2}...x^{i_k}y^{j_k} \]
it is possible that $i_1=0$ or $j_k=0$.
Now we define $z_0 = z$ and
\begin{alignat*}{2}
	z_1 & =w_1^{\pi_i(y^{j_1})}
	\qquad\qquad&\qquad\qquad
	w_1 &=z_0^{\pi_i(x^{i_1})}
\\
	z_2&=w_2^{\pi_i(y^{j_2})}
	&
	w_2&=z_1^{\pi_i(x^{i_2})}
\\  
	&\vdots
	&
	&\vdots
\\
	z_k &= z' = w_k^{\pi_i(y^{j_k})}
	&
	w_k&=z_{k-1}^{\pi_i(x^{i_k})} \,.
\end{alignat*}
%%%%%%%%%%%%%%%%%%%%%%%%%%%%%%%%%%%%%%%%%%%%%%%%
%$z=z_0$, $w_1=z_0^{\pi_i(x^{i_1})}$, $z_1=w_1^{\pi_i(y^{j_1})}$, $w_2=z_1^{\pi_i(x^{i_2})}$, $z_2=w_2^{\pi_i(y^{j_2})}$ ,..., $w_k=z_{k-1}^{\pi_i(x^{i_k})}$, $z'=z_k=w_k^{\pi_i(y^{j_k})}$.
We want to find $g'\in G$ such that $\phi(g')$ maps $z$ to $z'$ through the same points of $z_0=z,w_1,z_1,...w_k,z_k=z'$ as $\pi_i(g)$ does.
For this, we know that $\pi_i(y)$ acts on $\Omega_i$ just as $\phi(y)$ does, however, $\pi_i(x)$ and $\phi(x)$ do not act the same on the set $\Omega_i$.
In fact, for each $\ell$, $w_\ell^{\pi_i(y^{j_\ell})}=w_\ell^{\phi(y^{j_\ell})}$, however, $z_\ell^{\pi_i(x^{i_\ell})}=z_\ell^{\phi(x^{i_\ell})}$ unless $z = a_i$ or $z = b_i$.
If $z=a_i$ or $b_i$ then $z_\ell^{\pi_i(x^{i_\ell})}=z_\ell$. So in that case $w_{\ell+1}=z_\ell$.
We form $g'$ from $g$ by deleting from $g$ all those $x^{i_\ell}$ for which $z_\ell=a_i$ or $b_i$. Then we have $z^{\phi(g')}=z'$. 
%%%%%%%%%%%%%%%%%%%%%%%%%%%%%%%%%%%%%%%

$Case$ $2$:
If $z \in \Omega_i$ and $z' \in \Omega_j$, where $i \neq j$ then $\phi(x^{j-i})$ maps $a_i$ to $a_j$. Then $\exists g_1,g_2,g_3 \in G$ such that $z^{\phi(g_1)}=a_i$, $a_i^{\phi(g_2)}=a_j$, $a_j^{\phi(g_3)}=z'$. 
This implies $z^{\phi(g_1)\phi(g_2) \phi(g_3)}=z'$. i.e $z^{\phi(g_1g_2g_3)}=z'$.
% If $z$, $z'$ are in distinct subsets $\Omega_i$ and $\Omega_j$ then $\exist$ $g \in G$ with $z^{\phi(g)}=z'$, because $\exist g_1,g_2 \in G$ such that 
%\[z^{\phi(g_1)}=a_i \]
%\[a_j^{\phi(g_2)}=z' \]
%We know that $(a_1,...,a_k)^{j-i}$ maps $a_i$ to $a_j$, so $\phi(x)^{j-i}$ maps $a_i$ to $a_j$.
%There\~fore, we can find $g=g_1x^{j-i}g_2$ such that $\phi(g)$ maps $z$ to $z'$.
\end{proof}

\section{Imprimitive composition}
\label{sec:ImprimComp}

\begin{theorem}\label{thm:imprim_t=p}
Let $G,\pi_1,\ldots,\pi_p$ be as in Theorem~\ref{thm:comp}, suppose that $p$ is prime, and let $t=p$.
Suppose also that, for some finite set $\Omega$ of size $\deg$ and a permutation representation
$\pi$ of $G$ on $\Omega$, each $\pi_i$ is equivalent to $\pi$, via a bijection $f_i$.
Suppose further that each $(a_i,b_i)$ is a handle of $\Omega_i$, the image
of a handle $(a,b)$ of $\Omega$.

Now, for $\omega \in \Omega$, define $B_\omega \subseteq \U$ via
\[ B_\omega = \{ f_i(\omega): i = 1,\ldots p \}, \]
and let
\[\B = \{B_\omega : \omega \in \Omega \}. \]
Then the action $\psi$ of the permutation group $H:=\phi(G)$ on $\B$ is equivalent to the action of $\pi(G)$ on $\Omega$. $H$ acts imprimitively on $\U$ with blocks of imprimitivity $B_\omega$.

Let $\psi, N, J_i,K_i,Q_i,B_i$ be as defined in Proposition \ref{lemma:ImprimWreathProduct}. Then $Q$ is cyclic of order $p$ and $N$ is elementary abelian of order at most $p^\deg$.
Then $H$ is isomorphic to a subgroup of $C_p \wr \psi(H)$ and the action of $H$ on $N$ by conjugation induces an action of $\psi(H)$. Under this action $N$ is a submodule of the $\deg$-dimensional permutation module over $F_p$ for the subgroup $\psi(H)$
of $\S(\Omega)$.
\end{theorem}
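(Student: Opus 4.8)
The plan is to verify each assertion of the theorem in turn, mostly by pushing the general structure of Proposition~\ref{lemma:ImprimWreathProduct} through the explicit combinatorial description of $\phi$ given in Theorem~\ref{thm:comp}. First I would establish that $\B$ really is a block system and that $\psi$ is equivalent to $\pi$. Since all the $\pi_i$ are equivalent to $\pi$ via $f_i$, the permutations $\phi_y$ and the ``diagonal part'' $\pi_1(x)\cdots\pi_p(x)$ of $\phi_x$ respect the partition $\B$: a point $f_i(\omega)$ is sent to $f_i(\omega^{\pi(y)})$ and $f_i(\omega^{\pi(x)})$ respectively, so these map $B_\omega$ to $B_{\omega^{\pi(y)}}$ and $B_{\omega^{\pi(x)}}$. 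The only remaining ingredient of $\phi_x$ is the cycle $(a_1,\dots,a_p)(b_p,\dots,b_1)$; but all the $a_i$ lie in the single block $B_a$ and all the $b_i$ in $B_b$, so this permutation fixes $B_a$ and $B_b$ setwise and fixes every other block pointwise. Hence $\phi_x$ and $\phi_y$ both permute the blocks exactly as $\pi(x),\pi(y)$ permute $\Omega$, so $\psi(\phi(g))$ acts on $\B$ as $\pi(g)$ acts on $\Omega$; transitivity of $\pi$ then gives that $\B$ is genuinely a system of blocks (no coarsening is forced) and $\psi\cong\pi$. Imprimitivity is immediate once $1<\deg<|\U|=p\deg$, i.e. as soon as $\deg>1$.

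Next I would pin down $Q$. By Proposition~\ref{lemma:ImprimWreathProduct}, $Q_i\cong J_i/K_i$ is the group induced on the block $B_i$ by its setwise stabiliser, and all $Q_i$ are isomorphic to one group $Q$, with $H\le Q\wr\psi(H)$. To see $Q$ is cyclic of order dividing $p$, I would argue that the only way an element of $H$ can move points within a fixed block $B_\omega=\{f_1(\omega),\dots,f_p(\omega)\}$ without moving the block is by cycling the index $i$, because the diagonal action of the $\pi_i(x),\pi_i(y)$ is ``synchronised'' across the copies and the only desynchronising gadget is the $p$-cycle $(a_1,\dots,a_p)$ (and its partner on the $b$'s). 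More precisely, I would show that the image of $\phi(x)$ in its action on any block it fixes is either trivial or the $p$-cycle $i\mapsto i+1$ on the indices — trivial on $B_\omega$ unless $\omega\in\{a,b\}$, and equal to that index $p$-cycle on $B_a$ and $B_b$ — while $\phi(y)$ always acts trivially on blocks it fixes. Since $J_i$ is generated by (conjugates of) such elements, $Q_i$ embeds in $C_p$, and as $p$ is prime $Q\in\{1,C_p\}$; the case $Q=1$ would force $H$ to act faithfully on $\B$, which fails because e.g. $\phi(x)^{?}$ nontrivially permutes the $a_i$ while fixing every block, so $Q\cong C_p$.

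Then $N=\bigcap_i J_i=\ker\psi$ consists exactly of those elements of $H$ fixing every block setwise, hence acting on each $B_\omega$ as a power of the index $p$-cycle $c_\omega:i\mapsto i+1$; the map $N\to C_p^\Omega=\prod_\omega\langle c_\omega\rangle$, $n\mapsto(\text{action of }n\text{ on each }B_\omega)$, is an injective homomorphism (injective because an element fixing every point of $\U$ is trivial), so $N$ is elementary abelian of order at most $p^{\deg}$, and $P_1\cdots P_\deg$ is all of $C_p^{\deg}$ so the final ``$N\le P_1\cdots P_\deg$'' clause of the Proposition is consistent. For the module structure: conjugation by $H$ permutes the blocks via $\psi$, and I would check that conjugating $n\in N$ by $h\in H$ with $\psi(h)$ sending $B_\omega\to B_{\omega'}$ carries the $\langle c_\omega\rangle$-component of $n$ to the $\langle c_{\omega'}\rangle$-component of $n^h$ (up to a fixed automorphism of $C_p$, namely the identity once we fix compatible labellings $f_i$, since the $f_i$ are built uniformly), so that under the identification $N\hookrightarrow\mathbb{F}_p^\Omega=\mathbb{F}_p^{\deg}$ the conjugation action of $H$ is precisely the permutation action of $\psi(H)\le\S(\Omega)$ on coordinates. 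Thus $N$ is a $\psi(H)$-submodule of the degree-$\deg$ permutation module over $\mathbb{F}_p$. I expect the main obstacle to be the bookkeeping in the previous paragraph: showing cleanly that the induced action on each fixed block is \emph{exactly} a power of the standard index $p$-cycle and that the labellings $f_i$ can be chosen so that conjugation matches coordinate permutation on the nose (rather than twisted by block-dependent automorphisms of $C_p$) — this is where one must use that the single handle $(a,b)$ of $\Omega$ lifts to the handles $(a_i,b_i)$ of the $\Omega_i$ in the uniform way dictated by the $f_i$, and that the gluing $p$-cycle is the ``same'' cycle $i\mapsto i+1$ in every copy.
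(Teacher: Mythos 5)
Your proposal is correct and follows essentially the same route as the paper: check equivariance of the block map on the generators $\phi(x),\phi(y)$ (with the gluing cycle only affecting $B_a$ and $B_b$), deduce that $\B$ is a block system with $\psi$ equivalent to $\pi$, identify $Q\cong C_p$ by showing every element of $H$ shifts the index $i$ of $f_i(\omega)$ uniformly so that block stabilisers act as powers of the index $p$-cycle (witnessed nontrivially by $\phi(x)$ on $B_a$), and then read off $N\le C_p^{\deg}$ and the permutation-module structure from Proposition~\ref{lemma:ImprimWreathProduct}. The only wording I would tighten is the clause ``since $J_i$ is generated by (conjugates of) such elements'': $J_a$ need not be generated by stabilising conjugates of the generators, and the correct justification (which both you and the paper really use) is that an arbitrary word in $\phi(x)^{\pm1},\phi(y)^{\pm1}$ accumulates an index shift depending only on the sequence of blocks visited, not on the index, so its restriction to any block it stabilises is a power of the index cycle.
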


\begin{proof}
%Since the permutation representations, $\pi,\pi_1,\pi_2,...,\pi_p$ of $G$, on finite sets $\Omega,\Omega_1,\Omega_2,...,\Omega_p$ are equivalent, there exist a bijection $f_i: \Omega \rightarrow \Omega_i$ such that, 
For $\omega \in \Omega$, for all $g \in G$, and for $i =1,..,p$, we have
\begin{equation} \label{eq:myEqu}
 f_i(\omega^{\pi(g)}) =  f_i(\omega)^{\pi_i(g)}.
\end{equation}
Recall that for $t=p$, we have
\begin{eqnarray*}
\phi_x &=& \pi_1(x)\cdots \pi_p(x) \circ (a_1,\ldots,a_p)(b_b,b_{p-1},\cdots b_1)\\
\phi_y &=& \pi_1(y)\cdots \pi_p(y). \\
\end{eqnarray*}
%Since $f_1,f_2,...,f_p$ are bijections, it is clear that if $\omega \neq \omega^{'}$ then $B_{\omega}$ and $B_{\omega^{'}}$ are disjoint.

We want to prove that the action of $H=\phi(G)$ on $\B$ is equivalent to the action of $\pi(G)$ on $\Omega$. We need a bijection $F:\Omega \rightarrow \B$ so that for all $\omega \in \Omega$, all $g \in G$
\[F(\omega^{\pi(g)})=F(\omega)^{\phi(g)}. \]

We define $F:\Omega \rightarrow \B$ by $F(\omega)=B_{\omega}$. We need to check that $F$ is a bijection. Clearly it is surjective. Now, 
\[ F(\omega) = F(\omega')
\Rightarrow
 \{f_1(\omega) \ldots f_p(\omega) \}
 =
 \{f_1(\omega') \ldots f_p(\omega')\}. \]
Since $f_i(\omega),f_i(\omega') \in \Omega_i$ and the sets $\Omega_i$'s are disjoint, this implies that $f_i(\omega)=f_i(\omega')$  for each $i$.
So since each $f_i$ is a bijection, we get $\omega =\omega'$.

It remains to check that $F(\omega^{\pi(g)})=F(\omega)^{\phi(g)}$ i.e. $B_{\omega^{\pi(g)}}=(B_{\omega})^{\phi(g)}$ for all $g \in G$, $\omega \in \Omega$.
First suppose that $g=y$, for all $\omega$, the image of $B_{\omega}$ under $\phi(y)$ is 
\begin{align*}
B_{\omega}^{\phi(y)}& = \{f_1(\omega)^{\pi_1(y)},f_2(\omega)^{\pi_2(y)},...,f_p(\omega)^{\pi_p(y)} \} \\
& = \{f_1(\omega^{\pi(y)}),f_2(\omega^{\pi(y)}),...,f_p(\omega^{\pi(y)}) \} \\        
 & = B_{{\omega}^{\pi(y)}}.
\end{align*} 
Now suppose that $g=x$
if $\omega \neq a,b$ we have $B_\omega^{\phi(x)}=B_{\omega^{\pi(x)}}$.  Finally 
\begin{align*}
B_{a}^{\phi(x)} & = \{a_1,a_2,...,a_p \}^{\phi(x)} \\
 & = \{a_2,a_3,...,a_p,a_1 \} \\       
 & = B_a = B_{a^{\pi(x)}}
\end{align*} 
and 
\begin{align*}
B_{b}^{\phi(x)} & = \{b_p,b_1,\ldots, b_{p-1}\}= B_b = B_{b^{\pi(x)}}.
\end{align*}
So for all $\omega$, $B_{\omega}^{\phi(x)}=B_{\omega^{\pi(x)}}$.
Since $G$ is generated by $x$ and $y$ this proves that
$B_{\omega}^{\phi(g)}=B_{\omega^{\pi(g)}}$, and hence $F$ is an equivalence.

Now the sets $B_{\omega}$ are blocks of imprimitivity for $\phi$ if and only if for each $\omega \in \Omega$, $g \in G$ either $B_{\omega}=B_{\omega^{\phi(g)}}$ or $B_{\omega} \cap B_{\omega^{\phi(g)}}=\emptyset$. 

So suppose that $B_{\omega}\cap B_{\omega}^{\phi(g)}\neq \emptyset$. Then, since $B_{\omega}^{\phi(g)}=B_{\omega^{\pi(g)}}$ we have $f_i(\omega)=f_j(\omega^{\pi(g)})$ for some $i,j$. Since $\Omega_i \cap \Omega_j = \emptyset$, we have $i=j$. Then since $f_i$ is a bijection we have $\omega=\omega^{\pi(g)}$, and $B_{\omega}=B_{\omega^{\pi(g)}}=B_{\omega}^{\phi(g)}$.

Let $J_i,K_i,P_i, Q_i$ be as defined in Proposition \ref{lemma:ImprimWreathProduct}. 
By Proposition \ref{lemma:ImprimWreathProduct}, $H$ is isomorphic to a subgroup of 
$Q \wr \psi(H)$ and $N$ to a subgroup of $P_1P_2 \cdots P_\deg$, where $Q \cong Q_i$ for each $i$.

In order to find $N$ we need to identify the groups $Q_i$ and $P_i$.

$J_a$ is the subgroup of $H$ that fixes $B_a$ setwise. So $\phi(x) \in J_a$, since $\phi(x)$ fixes the block $B_a$. In fact $\phi(x)$ permutes the points of 
$B_a$ in a $p$-cycle. We claim that for any $g \in G$, for any blocks 
$B_c=\{c_1,\ldots, c_p\}$, 
$B_{c'}=\{c'_1,\ldots, c'_p\}$, 
if $c_i^{\phi(g)} = c'_j$ then  $c_{i+k}^{\phi(g)} = c'_{j+k}$. 

We justify our claim by examining the actions of the generators $\phi(x),\phi(y)$ on the union $\U$ of the blocks.
For $g=x$, $\phi(x)$ acts on  $B_a, B_b$ via $a_i^{\phi(x)}=a_{i+1}$ and $a_{i+1}^{\phi(x)} = a_{i+2}$ and $b_p^{\phi(x)} = b_{p-1}$ and $b_{p-1}^{\phi(x)} = b_{p-2}$. Otherwise it maps $c_i\in B_c$ to some $c'_i \in B_{c'}$.
For $g=y$, $\phi(y)$ maps each $c_i$ to some $c'_i$. 

So if $\phi(g) \in J_a$ then it preserves the cyclic order of $B_a$. So $J_a/K_a \cong Q_a$ is contained in a cyclic group of order $p$. 
Then since $\phi(x)$ acts on $B_a$ as an element of order $p$, we see that 
$J_a/K_a$ contains the cyclic group of order $p$; hence $J_a/K_a \cong C_p$.
It follows from transitivity that for each $i$, we have $Q_i \cong J_i/K_i \cong C_p$.

Now $P_i \subseteq Q_i$.
So $N \subseteq P_1 \cdots P_\deg \subseteq Q_1\cdots Q_\deg=Q^\deg$. Hence $N$ is at most $C_p^\deg$. So it is elementary abelian of order at most $p^\deg$.

Now we consider the action of $H$ on $N$ by conjugation. Since $N$ is abelian, $N$ is in the kernel of this action and so there is an induced action of $\psi(H)$ on $N$. It follows from the decription of the wreath product that $Q^\deg$ is the permutation module for $\psi(H)$. So $N\subseteq Q^\deg$ must be a submodule of that permutation module. 

\end{proof}

\begin{theorem}
\label{thm:imprim_t=1}
Suppose that $G=\Delta(p,q,r)$ has a permutation 
representation $\pi$ on a finite set $\Omega$, and let $h_1=\iota,h_2,\ldots h_p:\Omega \rightarrow \Omega$ be permutations of $\Omega$
that commute with $\pi$, that is, they satisfy
\begin{equation} \label{eq:myeq2}
h_i(\omega^{\pi(g)}) = (h_i(\omega))^{\pi(g)}, \forall \omega \in \Omega, g \in G.
\end{equation}
%\[ h_i(\omega^{\pi(g)}) = (h_i(\omega))^{\pi(g)}, \forall \omega \in \Omega, g \in G.\]

Now suppose that  $(a,b)$ is a $k$-handle for $\pi$, and for each $i$,
define $a_i=h_i(a),b_i=h_i(b)$. Suppose that the points $a_1,b_1,a_2,b_2,...,a_p,b_p$ are all distinct. 
Define $\phi: G \rightarrow \S(\Omega)$ as in Theorem~\ref{thm:comp}.

Let
\[ B_\omega = \{ h_i(\omega): i = 1,\ldots p \} \] and let \[ \B =\{B_{\omega} :\omega \in \Omega\}. \]
Then the action of $\phi(G)$ on $\B$ is equivalent to the action of $\pi(G)$ on $\B$. And the sets $B_\omega$ are blocks of imprimitivity for the action of $\phi(G)$ on $\Omega$ if and only if they are blocks of imprimitivity for the action of $\pi(G)$ on $\Omega$.
\end{theorem}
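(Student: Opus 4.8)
The plan is to reduce the whole statement to one identity: for every $g\in G$ and every $\omega\in\Omega$,
\[ B_\omega^{\,\phi(g)} \;=\; B_{\omega^{\pi(g)}} \;=\; B_\omega^{\,\pi(g)} . \]
Once this is available the theorem is essentially immediate. It shows that $\phi(G)$ really does permute $\B$ and that $\phi(g)$ and $\pi(g)$ induce the \emph{same} permutation of $\B$, so the two actions on $\B$ are equivalent (via the identity map $\B\to\B$). It also gives $B_\omega^{\,\phi(g)}\cap B_\omega=B_\omega^{\,\pi(g)}\cap B_\omega$ for all $g$, so the condition ``$B_\omega^{\,(\cdot)}\cap B_\omega\in\{\emptyset,B_\omega\}$ for all $g$'' holds for $\phi$ exactly when it holds for $\pi$; that is, the $B_\omega$ are blocks for $\phi(G)$ iff they are blocks for $\pi(G)$. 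So all the real work sits in the displayed identity.

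Before attacking it I would record two routine points. First, since each $h_i$ commutes with $\pi$, applying $h_i$ to $a^{\pi(x)}=a$, $b^{\pi(x)}=b$ and $a^{\pi((xy)^k)}=b$ shows each $(a_i,b_i)=(h_i(a),h_i(b))$ is again a $k$-handle for $\pi$, and by hypothesis the points $a_1,b_1,\dots,a_p,b_p$ are distinct, so these $p$ handles are pairwise disjoint; hence the construction of Theorem~\ref{thm:comp} applies (with $\Omega_1=\dots=\Omega_p=\Omega$, $\pi_1=\dots=\pi_p=\pi$) and $\phi$ is a well-defined permutation representation. Second, the easier half of the identity is immediate: for any $g$, $B_\omega^{\,\pi(g)}=\{h_i(\omega)^{\pi(g)}\}=\{h_i(\omega^{\pi(g)})\}=B_{\omega^{\pi(g)}}$ by \eqref{eq:myeq2}, so $\pi(G)$ permutes $\B$ and $B_\omega\mapsto B_{\omega^{\pi(g)}}$ is well defined.

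For the full identity it suffices to treat the generators $x$ and $y$, since the set of $g\in G$ satisfying it is readily seen to be a subgroup of $G$. For $g=y$ there is nothing to prove, as $\phi_y=\pi(y)$. For $g=x$, write $\phi_x=\pi(x)\circ C$ with $C=(a_1,\dots,a_p)(b_p,\dots,b_1)$; then $B_\omega^{\,\phi(x)}=\big(B_\omega^{\,\pi(x)}\big)^{C}=B_{\omega^{\pi(x)}}^{\,C}$, and since $\pi(x)$ is onto the identity for $g=x$ becomes the assertion that $C$ fixes the set $B_{\omega'}$ for every $\omega'\in\Omega$. Now $C$ fixes every point outside $\{a_1,\dots,a_p\}\cup\{b_1,\dots,b_p\}$ and restricts to a single cycle on each of these two sets, so it fixes a set $B_{\omega'}$ precisely when $B_{\omega'}$ meets $\{a_1,\dots,a_p\}$ in $\emptyset$ or all of it, and similarly for $\{b_1,\dots,b_p\}$.

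The crux --- the one point that has no counterpart in the proof of Theorem~\ref{thm:imprim_t=p} --- is thus the combinatorial claim: if $a_j\in B_{\omega'}$ then $B_{\omega'}=\{a_1,\dots,a_p\}=B_a$, symmetrically for the $b$'s, and no $B_{\omega'}$ contains both an $a$-point and a $b$-point (this last being forced by $B_a\cap B_b=\emptyset$). In Theorem~\ref{thm:imprim_t=p} the disjointness of the copies $\Omega_i$ meant only $\omega'=a$ could put a handle point into $B_{\omega'}$; here the $h_i$ all act on the one set $\Omega$, and $a_j\in B_{\omega'}$ only yields $\omega'=h_i^{-1}h_j(a)$ for some $i$, so that $B_{\omega'}=\{\,h_1h_i^{-1}h_j(a),\dots,h_ph_i^{-1}h_j(a)\,\}$; the claim is that this set equals $\{h_1(a),\dots,h_p(a)\}$, i.e.\ that $\{h_1,\dots,h_p\}$ is stable under right multiplication by $h_i^{-1}h_j$. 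This is where one must use that the permutations $h_1=\iota,h_2,\dots,h_p$ form (or, under the standing hypotheses, are forced to form) a group of order $p$ commuting with $\pi$ --- the structural role played by there being exactly $p$ of them with $h_1=\iota$. I expect isolating and correctly invoking this fact to be the only genuine obstacle; granting it, $C$ fixes every $B_{\omega'}$, the displayed identity holds for $x$ and hence for all of $G$, and the theorem follows as in the first paragraph.
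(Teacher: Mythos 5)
Your reduction is exactly the paper's: everything rests on the identity $B_\omega^{\phi(g)}=B_{\omega^{\pi(g)}}=B_\omega^{\pi(g)}$, verified on the generators, after which both conclusions (equivalence of the actions on $\B$, and the ``blocks for $\phi$ iff blocks for $\pi$'' statement) follow formally, and your $y$-case coincides with the paper's. The difference is that the paper disposes of the $x$-case with the single word ``similarly'', while you correctly observe that it is not similar: writing $\phi_x=\pi(x)\circ C$ with $C=(a_1,\ldots,a_p)(b_p,\ldots,b_1)$, the $x$-case is precisely the assertion that $C$ fixes every set $B_{\omega'}$, i.e.\ that any $B_{\omega'}$ meeting $\{a_1,\ldots,a_p\}$ equals it (likewise for the $b$'s). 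You isolate this as the crux and then stop, saying it should follow because the $h_i$ ``form (or are forced to form) a group of order $p$''. That is a genuine gap, and it cannot be closed from the stated hypotheses: $h_1=\iota$, commutation with $\pi$, and distinctness of the $2p$ handle points do \emph{not} force $\{h_1,\ldots,h_p\}$ to be closed under composition. Concretely, if $a_j\in B_{\omega'}$ then $\omega'=h_i^{-1}h_j(a)$ and $B_{\omega'}=\{h_\ell h_i^{-1}h_j(a):\ell=1,\ldots,p\}$; since the centraliser of a transitive $\pi(G)$ is semiregular, $B_{\omega'}=B_a$ for all such $i,j$ exactly when $\{h_1,\ldots,h_p\}$ is stable under right multiplication by each $h_i^{-1}h_j$, i.e.\ (taking $i=1$) exactly when it is a subgroup. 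Already for $p=2$, choosing $h_2$ of order at least $4$ in the centraliser gives $B_{h_2^{-1}(a)}=\{h_2^{-1}(a),a\}$, which meets $B_a=\{a,h_2(a)\}$ without equalling it, and $C$ carries it to $\{h_2^{-1}(a),h_2(a)\}$, which is not of the form $B_{\omega''}$ at all; so $\phi(x)$ need not even permute $\B$.

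So your proposal does not complete the proof, but the missing step is not an oversight on your part so much as a missing hypothesis: the theorem needs (and the paper's proof silently uses) the assumption that $h_1,\ldots,h_p$ form a subgroup of order $p$ (necessarily cyclic, $p$ being prime) of the centraliser of $\pi(G)$ in $\S(\Omega)$, which is how the construction is meant to be applied. Under that hypothesis your argument closes immediately: the set $S=\{h_1,\ldots,h_p\}$ satisfies $Sh_i^{-1}h_j=S$, hence $B_{\omega'}=B_a$ (resp.\ $B_b$) whenever they meet, $C$ fixes every $B_{\omega'}$, the identity holds for $x$, and the theorem follows as in your first paragraph. You should state this extra hypothesis explicitly rather than hoping it is ``forced''.
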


\begin{proof}
Our conditions ensure that for $i \neq i'$, the handles $[a_i,b_i]$ and $[a_i',b_i']$ are always disjoint. As in the proof of Theorem \ref{thm:imprim_t=p}, we see that the image of ${B_\omega}$ under $\phi(y)$ is 
\begin{align*}
{B_\omega}^{\phi(y)}&=\{ h_i(\omega)^{\pi(y)}: i=1\ldots p\} \\
& =\{ h_i(\omega^{\pi(y)}) : i=1 \ldots p\} =B_{\omega^{\pi(y)}}=B_{\omega}^{\pi(y)}.
\end{align*}
Similarly \[ B_{\omega}^{\phi(x)}=B_{\omega^{\pi(x)}}=B_{\omega}^{\pi(x)},\]
so \[ B_{\omega}^{\phi(g)}=B_{\omega^{\pi(g)}}=B_{\omega}^{\pi(g)}.\]

The sets $B_{\omega}$ are blocks for $\phi(G)$ acting on $\Omega$ if and only if $B_{\omega} \cap B_{\omega}^{\phi(g)}= \emptyset$ whenever $B_{\omega}^{\phi(g)} \neq B_{\omega}$, and they are blocks for $\pi(G)$ acting on $\Omega$ if and only if $B_{\omega} \cap B_{\omega}^{\pi(g)}=\emptyset$ whenever $B_{\omega}^{\pi(g)} \neq B_{\omega}$. So since $B_{\omega}^{\pi(g)}=B_{\omega}^{\phi(g)}$, the $B_{\omega}$ are blocks of imprimitivity for $\phi(G)$ if and only if they are blocks of imprimitivity for $\pi(G)$.

\end{proof}

\begin{theorem}
\label{thm:imprim_comp}
Suppose that $G=\Delta(p,q,r)$ has a transitive permutation
representation $\pi: G \rightarrow \S(\Omega)$ of degree $\deg$ with disjoint $t$-handles
$(a,b)$ and $(c,d)$, for some $t$.

Let $m$ be an integer, and suppose that 
$\alpha=\alpha_1\cdots \alpha_k$ and $\beta=\beta_1\cdots \beta_l$ are two
permutations, both products of disjoint $p$-cycles (the $\alpha_i$ and $\beta_j$), that generate a transitive
subgroup of $\S_m$.

Then we can make a transitive permutation representation $\phi$ of $G$ of degree
$m \deg$ as follows.

Suppose that $\pi_1=\pi$ and that $\pi_2,\ldots,\pi_m$ are representations
equivalent to $\pi$ on  $\{\deg +1,\ldots,2\deg\}$, $\{2\deg+1,,\ldots,3\deg\}$,$\ldots,\{(m-1)\deg+1,\ldots,m\deg\}$, and let $(a_i,b_i)$, $(c_i,d_i)$ be the copies of the handles
$(a,b)$ and $(c,d)$ in $\pi_i$.

Then for each of the cycles $\alpha_i=(i_1,\ldots,i_p)$ of $\alpha$,
we define \[\gamma_i=(a_{i_1},\ldots,a_{i_p})(b_{i_p},\ldots,b_{i_1}),\]
and for each of the cycles 
$\beta_j=(j_1,\ldots,j_p)$ of $\beta$,
we define \[\delta_j=(c_{j_1},\ldots,c_{j_p})(d_{j_p},\ldots,d_{j_1}).\]

Then we define
\begin{eqnarray*}
\phi(x)&=&\pi_1(x)\pi_2(x)\cdots \pi_m(x)\gamma_1\gamma_2\cdots\gamma_k\delta_1\delta_2\cdots\delta_l\\
\phi(y)&=&\pi_1(y)\pi_2(y)\cdots \pi_m(y).
\end{eqnarray*}

Let \[\B=\{ B_{\omega} : \omega \in \Omega \}. \] 
The action $\psi$ of $H := \phi(G)$ on $\B$ is equivalent to the action of $\pi(G)$ on $\Omega$.

The representation $\phi$ is imprimitive with blocks 
\[ B_\omega = \{ \omega,\deg+\omega,2\deg+\omega,\ldots, (m-1)\deg+\omega \}, \]
for each $\omega \in \Omega$.
Let $J_i,K_i,P_i,Q_i$ be as defined in Proposition \ref{lemma:ImprimWreathProduct}. 
Then $Q_i \cong \langle \alpha,\beta \rangle$.
%Let $N$ be the  
%normal subgroup of $H$ that fixes each of the blocks 
%$B_\omega$ setwise (that is, $N$ is the kernel of the action of $H$ on $\B$),. Then $N$ acts on each block as a normal subgroup of $\langle \alpha,\beta \rangle$.
Hence $H$ is isomorphic to a subgroup of $\langle \alpha,\beta \rangle \wr \psi(H)$.
\end{theorem}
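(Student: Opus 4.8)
The plan is to realise $\phi$ by \emph{iterating} the composition of Theorem~\ref{thm:comp}, and then to run the block arguments of Theorems~\ref{thm:imprim_t=p} and~\ref{thm:imprim_t=1} together with Proposition~\ref{lemma:ImprimWreathProduct}. First I would not check the defining relations of $G$ directly, but build $\phi$ in two rounds. Starting from the disjoint copies $\pi_1,\dots,\pi_m$, for each cycle $\alpha_i=(i_1,\dots,i_p)$ of $\alpha$ I apply Theorem~\ref{thm:comp} (with $t=p$) to $\pi_{i_1},\dots,\pi_{i_p}$ using the copies $(a_{i_1},b_{i_1}),\dots,(a_{i_p},b_{i_p})$ of the handle $(a,b)$, leaving untouched those diagrams not met by a cycle of $\alpha$; the result $\phi'$ has $\phi'(x)=\pi_1(x)\cdots\pi_m(x)\gamma_1\cdots\gamma_k$ and $\phi'(y)=\pi_1(y)\cdots\pi_m(y)$. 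This is where the disjointness of the handles is used: for the constructions to behave $(a,b)$ and $(c,d)$ must lie in distinct cycles of $\pi(x)\pi(y)$, so the cycle of $\pi_i(x)\pi_i(y)$ through $c_i$ contains no copy of $a$ or of $b$, and hence, by the cycle analysis in the proof of Theorem~\ref{thm:comp}, the cycle of $\phi'(x)\phi'(y)$ through $c_i$ is that same cycle; in particular each $(c_i,d_i)$ is still a $t$-handle of $\phi'$. Now apply Theorem~\ref{thm:comp} a second time, once per cycle $\beta_j$ of $\beta$, using the copies of $(c,d)$; the outcome has $x$- and $y$-images exactly $\phi(x)$ and $\phi(y)$, so $\phi$ is a representation of $G$. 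Each application of Theorem~\ref{thm:comp} fuses the diagrams it touches into one transitive diagram, and $\pi_i,\pi_j$ land in a common piece precisely when $i,j$ lie in one orbit of $\langle\alpha,\beta\rangle$, so transitivity of that group gives transitivity of $\phi$ on $\U$. (Alternatively one can mimic the three-part relation check of Theorem~\ref{thm:comp} verbatim, the only new point being that $a$-handle and $c$-handle cycles of $\phi(x)\phi(y)$ do not interact, again because the two handles lie in different $xy$-cycles.)

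Next I would deal with the block system. Identify $\U$ with $\Omega\times\{1,\dots,m\}$ so that $B_\omega=\{\omega\}\times\{1,\dots,m\}$, and put $F\colon\Omega\to\B$, $F(\omega)=B_\omega$, which is a bijection. As in Theorems~\ref{thm:imprim_t=p} and~\ref{thm:imprim_t=1} one checks $B_\omega^{\phi(g)}=B_{\omega^{\pi(g)}}$ on the generators: $\phi(y)$ fixes the index coordinate, so the case $g=y$ is immediate from the equivalences $\pi_i\sim\pi$, while $\phi(x)$ fixes the index coordinate off the four points $a,b,c,d$ and permutes each of $B_a,B_b,B_c,B_d$ to itself (acting on indices by $\alpha,\alpha^{-1},\beta,\beta^{-1}$), with $\pi(x)$ fixing $a,b,c,d$. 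Since distinct blocks $B_\omega$ are disjoint, $B_\omega^{\phi(g)}=B_{\omega^{\pi(g)}}$ is always either equal to $B_\omega$ or disjoint from it, so every $B_\omega$ is a block of imprimitivity; these blocks are proper and nontrivial, so $\phi$ is imprimitive, and $F$ is an equivalence between the action $\psi$ of $H=\phi(G)$ on $\B$ and the action of $\pi(G)$ on $\Omega$.

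Finally I would identify the $Q_i$ and conclude. By Proposition~\ref{lemma:ImprimWreathProduct} the $Q_i$ are all isomorphic to a single $Q$, so it is enough to compute $Q_a=J_a/K_a$ for the block $B_a$. From the index description in the previous step, if $\phi(g)\in J_a$ then its action on $B_a$ is the ordered product of the factors $\alpha^{\pm e},\beta^{\pm e}$ picked up whenever the $\pi$-orbit trace of $g$ through $a$ sits at one of $a,b,c,d$ and a power $x^{\pm e}$ is applied, so $Q_a\subseteq\langle\alpha,\beta\rangle$. For the reverse inclusion, $\phi(x)\in J_a$ induces $\alpha$ on $B_a$; and to obtain $\beta$ I choose, by transitivity of $\pi$, a word $g_0$ with $a^{\pi(g_0)}=c$ and then delete from it every $x$-power it applies while its trace is at $a,b,c$ or $d$ — legitimate because those points are $\pi(x)$-fixed, so the deletions alter neither the endpoint $c$ nor the reduced trace — leaving a $g_0$ that induces the trivial index permutation from $B_a$ to $B_c$, whence $\phi(g_0xg_0^{-1})\in J_a$ induces $\beta$ on $B_a$. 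Thus $Q_a=\langle\alpha,\beta\rangle$, so $Q_i\cong\langle\alpha,\beta\rangle$ for each $i$, and Proposition~\ref{lemma:ImprimWreathProduct} gives that $H$ is isomorphic to a subgroup of $Q\wr\psi(H)=\langle\alpha,\beta\rangle\wr\psi(H)$. The main obstacle is the survival of the $(c,d)$-handle through the first round (equivalently, the decoupling of $a$- and $c$-handle cycles of $\phi(x)\phi(y)$), which genuinely rests on $(a,b)$ and $(c,d)$ being in different cycles of $\pi(x)\pi(y)$; a lesser nuisance is the deletion argument producing the index-trivial connecting word $g_0$.
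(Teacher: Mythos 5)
Your proposal is correct and follows essentially the same route as the paper: realise $\phi$ by iterating the composition of Theorem~\ref{thm:comp}, verify $B_\omega^{\phi(g)}=B_{\omega^{\pi(g)}}$ on the generators to get the block system and the equivalence with $\pi$, bound $Q_a$ above by tracking the index factors $\alpha^{\pm 1},\beta^{\pm 1}$ picked up at $B_a,B_b,B_c,B_d$, and obtain $\beta\in Q_a$ from a conjugate of $\phi(x)$ by an index-trivial word carrying $a$ to $c$. Your treatment is in fact slightly more careful than the paper's at the construction stage (the survival of the $(c,d)$-handles through the first round and the transitivity via transitivity of $\langle\alpha,\beta\rangle$ are argued rather than asserted), and your deletion argument for the connecting word replaces the paper's equivalent shortest-word argument.
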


\begin{proof}
Since we could construct $\phi$ by repeating the construction of Theorem $\ref{thm:comp}$, it is clear we have a permutation representation $\phi$ of degree $m \deg $. To see that the sets $B_\omega$ are blocks, we consider their images under $\phi(y)$ and $\phi(x)$.
We see that 

\[B_{\omega}^{\phi(y)} = \{\omega^{\pi(y)},\deg+\omega^{\pi(y)},...,(m-1)\deg+\omega^{\pi(y)} \}=B_{\omega^{\pi(y)}} \]

Similarly, if $\omega \neq a,b,c,d$ then image of $B_{\omega}$ under $\phi(x)$ is

\[B_{\omega}^{\phi(x)} = \{\omega^{\pi(x)},\deg+\omega^{\pi(x)},...,(m-1)\deg+\omega^{\pi(x)} \}= B_{\omega^{\pi(x)}}.\]
Finally, for $\omega=a,b,c,d$ we have $B_{a}^{\phi(x)}= B_{a}= B_{a^{\pi(x)}}$, $B_{b}^{\phi(x)}= B_{b}= B_{b^{\pi(x)}}$, $B_{c}^{\phi(x)}= B_{c}= B_{c^{\pi(x)}}$ and $B_{d}^{\phi(x)}= B_{d}= B_{d^{\pi(x)}}$.

So for all $\omega$, $g \in G$ $B_{\omega}^{\phi(g)}=B_{\omega^{\pi(g)}}$. This proves that the action of $\phi(G)$ of $\B$ is equivalent to the action of $\pi(G)$ on set $\Omega$.
Then just as in Theorem \ref{thm:imprim_t=p} we can see that $B_{\omega}$ are blocks of imprimitivity for the image of $G$ under $\phi$.

To prove that $Q_a \cong\langle \alpha, \beta \rangle$, we   
prove first that $Q_a \subseteq \langle \alpha, \beta \rangle$
and then that $Q_a \supseteq \langle \alpha, \beta \rangle$.

%%%%%%%%%%
To prove that $Q_a \subseteq \langle \alpha, \beta \rangle$, we need to show that for any $g \in G$ with $\phi(g) \in J_a$, $\phi(g)$ acts on $B_a$ as an element of $\langle \alpha, \beta \rangle$. 

Let $g= x^{i_1} y^{j_1} \ldots x^{i_k} y^{j_k}$, where $\phi(g)= \phi(x)^{i_1}\phi(y)^{j_1} \ldots \phi(x)^{i_r}\phi(y)^{j_k} \in J_a$.
Define $z_0,z_1,\ldots,z_k=a' \in \U$ with $a' \in B_a$, by
$$a=z_0\xrightarrow{\phi(x)^{i_1}\phi(y)^{j_1}}z_1\xrightarrow{\phi(x)^{i_2}\phi(y)^{j_2}}z_2 \ldots z_{k-1} \xrightarrow{\phi(x)^{i_r}\phi(y)^{j_r}}z_k=a.$$

Let $a_1,\ldots,a_\deg$ be the points of $B_a$. 
Where the points of $\Omega$ are labelled $1,2,\ldots,\deg$,
the construction ensures that
the point $a_\ell$ of $B_a \subseteq \U$ is numbered $a+(\ell-1)\deg$.
So now suppose that 
$a_\ell= a+(\ell -1)\deg$ is an arbitrary point of $B_a$ for some $\ell \in\{1 \ldots m\}$
We have $$ (a+(\ell -1)\deg)^{\phi(g)} = a+(\ell^{\xi^{i_1}\xi^{i_2} \ldots \xi^{i_r}}-1)\deg$$
where each $\xi_i=1,\alpha,\alpha^{-1},\beta,\beta^{-1}$ such that
$$ 
\xi_i=
\begin{cases}
1 & \text{if } \,z_i \not\in B_a \cup B_b \cup B_c \cup B_d,\\
\alpha & \text{if }\, z_i \in B_a,\\
\alpha^{-1}& \text{if }\, z_i \in B_b,\\
\beta & \text{if } \,z_i  \in B_c,\\
\beta^{-1} & \text{if }\, z_i \in B_d.
\end{cases}
$$
So $\phi(g)$ acts on $B_a$ as $\xi^{i_1}\xi^{i_2} \ldots \xi^{i_r} \in \langle \alpha, \beta \rangle$. 
This shows that $Q_a \subseteq \langle \alpha, \beta \rangle$.

%%%%%%%%%%%%%
%%%%%%%%%%%%%
%%%%%%%%%%%
We see easily that $\phi(x) \in J_a \cap J_b \cap J_c \cap J_d$,
and that the element $\phi(x)$ permutes the
points of $B_a$ in the same
way that $\alpha$ permutes the points of $\{1,\ldots,m\}$. 

To complete the proof that $\langle \alpha,\beta \rangle \subseteq Q_a$ we need to find a conjugate of $\phi(x)$,
$\phi(g)\phi(x)\phi(g)^{-1}$, that acts on $B_a$ as $\beta$. For this, we choose $g \in G$ to be a shortest possible word in $x$ and $y$ such that $\pi(g)$
(acting on $\Omega= \{ 1,2,\ldots,\deg\}$) takes 
$a$ to $c$. 
Define $z_0,z_1,\ldots,z_k \in \Omega$ such that
$$a=z_0\xrightarrow{\pi(x^{i_1})\pi(y^{j_1})}z_1 \xrightarrow{\pi(x^{i_2})\pi(y^{j_2})}z_2 \ldots z_{k-1}\xrightarrow{\pi(x^{i_k})\pi(y^{j_k})}z_k=c.$$

Suppose $z_j=a,b,c,d$ for $j<k$. Then $z_j^{\pi(x)}=z_j$ and $z_j^{\pi(x^{i_{j+1}})}=z_j$, so we can leave out $x^{i_{j+1}}$ and get a shorter choice for $g$. 
This cannot happen, so we can assume
that $z_j \notin \{a,b,c,d \}$ for $j<k$. 
We choose $i_1=0$ since $a^{\pi(x)}=a$. So
$(a+(\ell-1)\deg)^{\phi(y)^{j_1}}=z_1+(\ell-1)\deg$.

$z_1 \neq \{a,b,c,d \}$ by assumption as above. So 
$(z_1+(\ell-1)\deg)^{\phi(x)^{i_2}}=z_1^{'}+(\ell-1)\deg$ for some $z_1^{'} \neq z_1$, and 
$(z_1^{'}+(\ell-a)\deg)^{\phi(y)^{j_2}}=z_2+(\ell-1)\deg$. 

$z_2 \not \in \{a,b,c,d \}$ provided that $2 < k$,
and we continue as above.
 
We end up with, for each $\ell$,
\begin{align*}
(a+(\ell-1)\deg)^{\phi(g)} &= c+(\ell-1)\deg \\
(a+(\ell^\beta-1)\deg)^{\phi(g)} &= c+(\ell^\beta-1)\deg \\
(a+(\ell-1)\deg)^{\phi(g)\phi(x)} &= c+(\ell^{\beta}-1)\deg\\
(a+(\ell-1)\deg)^{\phi(g)\phi(x)\phi(g)^{-1}} &= a+(\ell^\beta-1)\deg
\end{align*}
So $\phi(g)\phi(x)\phi(g)^{-1}$ acts on $B_a$ as $\beta$. This shows that $\langle \alpha, \beta \rangle \subseteq Q_a$. Hence, $Q_a \cong \langle \alpha, \beta \rangle$. To finish we apply Proposition \ref{lemma:ImprimWreathProduct}.

\end{proof}

\section{Imprimitive composition with alternating groups}
\label{sec:Alt}

\begin{theorem}
\label{thm:ImprimCompAlt1}
Suppose that $G=\Delta(p,q,r)$ is a triangle group with $p$ prime, $p\leq q \leq r$. Suppose that $\pi(G) \cong \A_\deg$ and $H=\phi(G)$ is constructed as in Theorem \ref{thm:imprim_t=p}. Assume the notation of Theorem \ref{thm:imprim_t=p}.
Suppose that  $\deg > 6$.
Then either 

\begin{enumerate}
\item $p|qr$, $p | \deg$ and $H \cong C_p \times \A_\deg$,
or \item $H \cong C_p^{\deg -1} \rtimes \A_\deg$.
\end{enumerate}
Note that the second case might occur even when $p|qr$ and $p|\deg$.
\end{theorem}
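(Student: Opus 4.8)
The plan is to pin down the kernel $N$ of $\psi\colon H\to\A_\deg$ among the $F_p\A_\deg$-submodules of the $\deg$-dimensional permutation module supplied by Theorem~\ref{thm:imprim_t=p}, and then to reconstruct $H$ from $N$. Since $\deg>6$, the group $\A_\deg$ is simple and, by the standard modular representation theory of symmetric and alternating groups, the submodule lattice of $F_p^\deg$ is the familiar one: the submodules are $0$, the trivial line $\langle\mathbf 1\rangle$ spanned by the all-ones vector, the augmentation submodule $U=\{v:\sum_i v_i=0\}$ of dimension $\deg-1$, and $F_p^\deg$; they form a chain $0\subset\langle\mathbf 1\rangle\subset U\subset F_p^\deg$ when $p\mid\deg$, while otherwise $F_p^\deg=\langle\mathbf 1\rangle\oplus U$ with $U$ irreducible. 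In particular $|N|\in\{1,\,p,\,p^{\deg-1},\,p^\deg\}$, and $|N|=p$ forces $N=\langle\mathbf 1\rangle\subseteq U$, i.e.\ $\mathbf 1\in U$, i.e.\ $p\mid\deg$.

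Next I would locate $\phi(x)$ and $\phi(y)$ inside $C_p\wr\A_\deg=C_p^\deg\rtimes\A_\deg$ under the embedding of Proposition~\ref{lemma:ImprimWreathProduct}, using the identification $f_i(\omega)\leftrightarrow i$ of each block $B_\omega$ with $\mathbb Z/p$. Since $a$ and $b$ are fixed by $\pi(x)$ (being handle points) and each $\pi_i(x)$ fixes every $a_j,b_j$, the only local twisting of $\phi(x)$ is $+1$ on $B_a$ and $-1$ on $B_b$; writing $e_\omega$ for the basis vector of $F_p^\deg$ indexed by $\omega$, this gives $\phi(x)=(e_a-e_b;\pi(x))$ and $\phi(y)=(0;\pi(y))$. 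The augmentation map $\Sigma\colon C_p\wr\A_\deg\to C_p$, $(v;\sigma)\mapsto\sum_i v_i$, is a homomorphism, and $\Sigma(\phi(x))=1-1=0=\Sigma(\phi(y))$, so $\Sigma$ vanishes on $H$; hence $H\subseteq U\rtimes\A_\deg$ and $N=H\cap C_p^\deg\subseteq U$, which eliminates $N=F_p^\deg$. To eliminate $N=0$: if $N=0$ then $\psi$ is an isomorphism, so the block stabiliser $J_a$ is isomorphic to a point stabiliser in the degree-$\deg$ action of $\A_\deg$, which for $\deg>6$ is $\A_{\deg-1}$, a non-abelian simple group; but $J_a$ surjects onto $Q_a\cong C_p$ (by Theorem~\ref{thm:imprim_t=p}, $\phi(x)\in J_a$ acts on $B_a$ as a $p$-cycle), which is impossible. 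Hence $N\in\{\langle\mathbf 1\rangle,U\}$, with $N=\langle\mathbf 1\rangle$ only when $p\mid\deg$.

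If $N=U$ then $U=N\trianglelefteq H$ and $[H:U]=|\A_\deg|=[U\rtimes\A_\deg:U]$, so the inclusions $U\le H\le U\rtimes\A_\deg$ force $H=U\rtimes\A_\deg\cong C_p^{\deg-1}\rtimes\A_\deg$, which is conclusion~(2). If $N=\langle\mathbf 1\rangle$ (so $p\mid\deg$), then $N$ is a trivial module and abelian, hence central in $H$, so $1\to C_p\to H\to\A_\deg\to 1$ is a central extension; it is either split, giving $H\cong C_p\times\A_\deg$, or a non-split stem extension, which can occur only if $p$ divides the order of the Schur multiplier of $\A_\deg$. For $\deg>6$ that multiplier is $C_6$ when $\deg=7$ and $C_2$ when $\deg\ge8$, so together with $p\mid\deg$ the non-split possibility forces $p=2$ and $\deg\ge8$. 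Thus for $p$ odd the extension splits, $H\cong C_p\times\A_\deg$; and then $C_p=H^{\mathrm{ab}}$ is a quotient of $G$, while $G^{\mathrm{ab}}=\mathbb Z^2/\langle(p,0),(0,q),(r,r)\rangle$ admits $C_p$ as a quotient exactly when $p\mid qr$, so $p\mid qr$, which is conclusion~(1).

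The one remaining sub-case --- $p=2$, $\deg\ge8$ even, $N=\langle\mathbf 1\rangle$ --- is where I expect the real difficulty: one must rule out that the central extension $1\to C_2\to H\to\A_\deg\to 1$ is the double cover $2.\A_\deg$. Here I would work with the $1$-cocycle $\delta\colon G\to U$ (for the $\pi$-action) determined by $\delta(x)=e_a-e_b$ and $\delta(y)=0$, which encodes $\phi$ via $H=\{(\delta(g);\pi(g)):g\in G\}$ and $N=\delta(\ker\pi)$: evaluating $\delta$ along a word $w\in\ker\pi$ with $\delta(w)=\mathbf 1$ and tracking the $x$-exponent sums of the prefixes of $w$ should show $\mathbf 1\notin[H,H]=\phi([G,G])$, so $H$ is not perfect, so $H\not\cong 2.\A_\deg$ and (as above) $2\mid qr$. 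An alternative would be a non-$(2,q,r)$-generation statement for $2.\A_\deg$ when $q$ and $r$ are both odd. Everything else is the submodule lattice plus the two sandwich-type inclusions above; the abelianisation/cocycle analysis for $p=2$ is the main obstacle.
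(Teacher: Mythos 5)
Most of your argument tracks the paper's proof closely: the identification of the submodule lattice of the permutation module, the augmentation argument forcing $N$ into the $(\deg-1)$-dimensional submodule (the paper does this by writing $H\subseteq CV$ with $V=\langle\tau_\omega\tau_{\omega'}^{-1}\rangle\cong W_{\deg-1}$, which is your $\Sigma$-computation in different clothing), the elimination of $N=1$ via the block stabiliser $J_a$ surjecting onto $Q_a\cong C_p$, the order count giving $H=W_{\deg-1}\rtimes\A_\deg$ when $N=W_{\deg-1}$, and the abelianisation computation showing $C_p\times\A_\deg$ forces $p\mid qr$. All of that is sound.

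The genuine gap is exactly where you say it is: the sub-case $N=\langle\mathbf 1\rangle$ with $p=2$ and $\deg\geq 8$ even, where you must exclude $H\cong 2.\A_\deg$. You do not prove this; you only propose a cocycle computation that ``should show'' $\mathbf 1\notin[H,H]$, without carrying it out, and without this step conclusion (1) is not established (a perfect central extension $2.\A_\deg$ is not on the theorem's list). The paper closes this by pushing the block-stabiliser argument one step further, and you already have every ingredient needed: since $N=\langle\mathbf 1\rangle$ acts on each block as a full $p$-cycle, $N\cap K_a=1$, so $K_a$ maps isomorphically onto $\psi(J_a)=\A_{\deg-1}$ and $J_a=N\times K_a\cong C_p\times\A_{\deg-1}$; but in $2.\A_\deg$ the preimage of a point stabiliser $\A_{\deg-1}$ is the non-split (indeed perfect) double cover $2.\A_{\deg-1}$, which contains no such direct-product subgroup of that order. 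Hence $H$ splits and the central-extension analysis goes through for $p=2$ as well. I would recommend replacing your proposed cocycle/exponent-sum computation, whose success is not evident as stated, with this structural argument.
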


\begin{proof}
By Theorem~\ref{thm:imprim_t=p},
$H$ is isomorphic to a subgroup of $C_p \wr \psi(H) \cong C_p^\deg \rtimes
\psi(H)$. And since $\psi(H) \cong \pi(G) \cong \A_\deg$, we have
$H \subseteq C_p^\deg \rtimes \A_\deg$.
In fact $H \cong N . \A_\deg$.

In addition we know from Theorem~\ref{thm:imprim_t=p}
that $N \subseteq C_p^\deg$ is a submodule of the $\deg$-dimensional permutation module $W_\deg$ for $\A_\deg$. It is possible that we have $N=1$.

We hope to prove that, 
except in the situation which can only occur for the particular values of $p,d$ covered in (1),
the normal subgroup $N$ is isomorphic to $C_p^{\deg-1}$. 

First we show that $N \neq 1$. 
If $N=1$, then $H \cong \psi(H) =\A_\deg$. In that case the group $J_a$, (which stabilises $B_a$ as a set) is isomorphic to $\A_{\deg-1}$. 
However, by Theorem~\ref{thm:imprim_t=p} $J_a$ acts on $B_a$ as $Q_a$, which is cyclic of order $p$. So there is a homomorphism from $J_a$ to $C_p$, i.e. there is a homomorphism from $\A_{\deg-1}$ to $C_p$. Now the kernel of this homomorphism is a normal subgroup of $\A_{\deg-1}$ of index $p$. Since if $\deg-1 \geq 5$, the group $\A_{\deg-1}$ is simple, but this cannot happen. 

Now we need to show that $N$ is a submodule of $W_{\deg-1}$. 
Examining the construction of $H=\phi(G)$ we see that $H$ is generated by 
$$ \phi_x = \pi_1(x)\pi_2(x) \ldots \pi_p(x) \tau_a \tau_b^{-1} $$  where $\tau_\omega=(\omega_1,\omega_2, \ldots, \omega_p)$ and 
$$ \phi_y=\pi_1(y)\pi_2(y) \ldots \pi_p(y). $$
Now define $$C=\{\pi_1(g)\pi_2(g) \ldots \pi_p(g) : g \in G \}$$ and 
$$ V= \langle \tau_\omega \tau_{\omega'}^{-1} : \omega, \omega' \in \Omega \rangle $$
both subgroups of $\S(\U)$. We see that $V$ is isomorphic to the $(\deg-1)$-dimensional submodule $W_{\deg-1}$ of $Q^\deg$ for $\A_\deg$, and $C$ is isomorphic to $\pi(G)\cong \A_\deg$. Then $\phi_x,\phi_y$ can both be written as products within $CV$, so $H$ which is generated by $\phi_x$ and $\phi_y$ is a subgroup of $W_{\deg-1} \rtimes \A_\deg$. In particular $N \subseteq W_{\deg-1}$, so $|N| \leq p^{\deg-1}$. Hence $N$ is a non-trivial submodule of $W_{\deg-1}$. 

$W_{\deg-1}$ has no proper submodules unless $p$ divides $\deg$. If $p | \deg$ 
then the trivial permutation module $W_1$ is a submodule of $W_{\deg-1}$. So if $p$ does not divide $\deg$ we see that we must have $N=W_{\deg-1}$.

So suppose now that $p | \deg$, that $N$ is the trivial submodule $W_1$ of $W_{\deg-1}$, and so $H=N.\A_\deg=C_p.\A_\deg$. We look at $J_a$, the stabiliser of $B_a$. We have $\psi(J_a)=\A_{\deg-1}$. We also have $Q_a \cong C_p$. So $J_a$ has a quotient isomorphic to $C_p$. 
So $J_a$ maps onto $C_p$ and also maps onto $\A_{\deg-1}$. By the Jordan-H\"{o}lder Theorem, $J_a\cong C_p.\A_{\deg-1}$ where $C_p$ and $\A_{\deg-1}$ are simple and their intersection is the identity subgroup. So $J_a$ must be split. 

We see that $H$ must also be a split extension in this case, because the non-split extensions are well known by \cite{Schur} (and see \cite{Wilson}). The non-split extension $C_p.\A_\deg$ can only exist when $p=2$ and cannot admit such subgroups $J_a$ as above.  

So the extension $N.\A_\deg$ splits, and then, since the action of $\A_\deg$ on $N$ 
is trivial, we must have a direct product, $H=C_p \times \A_\deg$, and so are in case (1).
In that case $C_p$ is a homomorphic image of $G= \Delta(p,q,r)$, and hence an
abelian quotient of it. But a straightforward computation shows that
$G/G'$ can only map onto $C_p$ when $p|qr$.

If $N=W_{\deg-1}$, then $H$ is an extension of $C_p^{\deg-1}$ by $\A_{\deg}$. So now $H$ is a subgroup of $W_{\deg-1}\rtimes \A_\deg$ of order $p^{\deg-1}.|\A_\deg|$. Hence $H$ is the whole of $W_{\deg-1}\rtimes \A_\deg$ and the result is proved.  

\end{proof}
The following is an immediate corollary of the theorem.
\begin{corollary}
Suppose that $G=\Delta(p,q,r)$ is a triangle group with $p$ prime, $p \leq q \leq r$.
Suppose that $\deg > 6$ and in addition $p \nmid qr$ and $p \nmid \deg$.
Then provided that $G$ has a quotient $\A_\deg$ containing at least one handle, $G$ also has a quotient $C_p^{\deg-1} \rtimes \A_\deg$.
\end{corollary}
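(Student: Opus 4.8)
The plan is to recognise the corollary as a direct application of Theorem~\ref{thm:imprim_t=p} followed immediately by Theorem~\ref{thm:ImprimCompAlt1}. Suppose $G$ has a quotient $\pi:G\rightarrow \S(\Omega)$ with $\pi(G)\cong\A_\deg$ and carrying a $k$-handle $(a,b)$, which exists by hypothesis. The first step is to build legitimate input for the construction of Theorem~\ref{thm:imprim_t=p}: take $p$ pairwise disjoint copies $\Omega_1,\ldots,\Omega_p$ of $\Omega$, let $f_i:\Omega\rightarrow\Omega_i$ be bijections, let $\pi_i$ be the representation obtained from $\pi$ by transport of structure along $f_i$ (so each $\pi_i$ is equivalent to $\pi$), and set $(a_i,b_i)=(f_i(a),f_i(b))$. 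Since $t=p\leq p$ and each copy carries exactly one handle, the disjointness requirements in Theorem~\ref{thm:comp} are vacuous, so that theorem yields a transitive permutation representation $\phi:G\rightarrow\S(\U)$ on the disjoint union $\U$ of the $\Omega_i$; in particular $H:=\phi(G)$ is a quotient of $G$.

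Next I would invoke Theorem~\ref{thm:imprim_t=p}, all of whose hypotheses now hold ($p$ prime, $t=p$, each $\pi_i$ equivalent to $\pi$ via $f_i$, and each $(a_i,b_i)$ the image of the handle $(a,b)$). It gives that $H$ acts imprimitively with block system $\B=\{B_\omega:\omega\in\Omega\}$, that the action $\psi$ of $H$ on $\B$ is equivalent to $\pi$ so that $\psi(H)\cong\pi(G)\cong\A_\deg$, and that $H$ embeds in $C_p\wr\psi(H)$ with $N$ a submodule of the $\deg$-dimensional permutation module over $F_p$ for $\A_\deg$. This is exactly the standing situation of Theorem~\ref{thm:ImprimCompAlt1}, and the remaining hypotheses there — $p$ prime, $p\leq q\leq r$, and $\deg>6$ — are all among the corollary's hypotheses.

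Finally, Theorem~\ref{thm:ImprimCompAlt1} forces $H$ into case (1) or case (2). Case (1) requires both $p\mid qr$ and $p\mid\deg$, and both are excluded by hypothesis; hence only case (2) survives, giving $H\cong C_p^{\deg-1}\rtimes\A_\deg$. Since $H=\phi(G)$ is a quotient of $G$, this proves the corollary. The argument is really just bookkeeping once Theorems~\ref{thm:imprim_t=p} and~\ref{thm:ImprimCompAlt1} are in hand; the only point that needs a moment's care — and the nearest thing to an obstacle — is verifying that a transitive quotient $\A_\deg$ equipped with a single handle genuinely supplies valid input to the composition construction, which is precisely what passing to $p$ disjoint equivalent copies achieves.
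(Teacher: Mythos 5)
Your proposal is correct and is exactly the argument the paper intends: the paper states this result as an ``immediate corollary'' of Theorem~\ref{thm:ImprimCompAlt1} without writing out a proof, and your write-up (form $p$ disjoint equivalent copies of the given $\A_\deg$-quotient with its handle, feed them into Theorems~\ref{thm:comp} and~\ref{thm:imprim_t=p}, then note that the hypotheses $p \nmid qr$ and $p \nmid \deg$ rule out case (1) of Theorem~\ref{thm:ImprimCompAlt1}) is the natural expansion of that remark. No gaps.
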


\begin{theorem}
\label{thm:ImprimCompAlt2}
Suppose that $G=\Delta(p,q,r)$ is a triangle group with $p$ prime, $p\leq q \leq r$. Suppose that $\pi(G) \cong \A_\deg$ and $H=\phi(G)$ is constructed as in Theorem \ref{thm:imprim_comp}. Assume the notation of Theorem \ref{thm:imprim_comp}.
Suppose that $p$ is prime, $\deg > 6$ and $\langle \alpha, \beta \rangle \cong \A_m$, where $m \neq \deg -1 $ and $m \geq 5$. Then $H \cong \A_m \wr \A_\deg$.
\end{theorem}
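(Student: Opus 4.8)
The plan is to locate $H$ inside a wreath product and then determine the normal subgroup $N$. By Theorem~\ref{thm:imprim_comp}, $H$ is isomorphic to a subgroup of $W:=\langle\alpha,\beta\rangle\wr\psi(H)\cong\A_m\wr\A_\deg$, with base group $B\cong\A_m^{\deg}$, and $N=\ker(\psi)=H\cap B$, so $H/N\cong\psi(H)\cong\pi(G)\cong\A_\deg$; since $\deg>6$ this copy of $\A_\deg$ acts on the $\deg$ blocks as the natural, hence primitive (indeed $2$-transitive), permutation group, and its block stabilisers are copies of $\A_{\deg-1}$. As $|W|=|\A_m|^{\deg}\,|\A_\deg|$, it suffices to prove $N=B$, for then $|H|=|W|$ forces $H=W$.

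First I would constrain the coordinate projections $\rho_i(N)\le Q_i\cong\A_m$. Because $N\trianglelefteq H$ fixes every block, $N\le J_i$ and $N\trianglelefteq J_i$, so $NK_i/K_i\cong\rho_i(N)$ is normal in $J_i/K_i\cong Q_i\cong\A_m$; simplicity of $\A_m$ ($m\ge5$) gives $\rho_i(N)\in\{1,\A_m\}$. Transitivity of $\psi(H)$ on the blocks, together with $N^h=N$ for all $h\in H$, forces all the $\rho_i(N)$ to coincide, so either $N=1$ or $N$ is a subdirect subgroup of $\A_m^{\deg}$. In the latter case the standard structure theory of subdirect products of a finite non-abelian simple group presents $N$ as a direct product of (possibly twisted) diagonal subgroups indexed by a partition of $\{1,\dots,\deg\}$; this partition is preserved by the coordinate action of $\psi(H)$ (because $N$ is $H$-invariant), and primitivity forces it to be trivial. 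Hence $N=B$ (the desired case, so that $Q_1\cdots Q_\deg=B\le H$ and we are done), or $N$ is a single full diagonal $D\cong\A_m$.

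Ruling out $N=1$ is quick: then $\psi$ restricts to an isomorphism $H\cong\A_\deg$, the block stabiliser $J_a\cong\A_{\deg-1}$ must surject onto $Q_a\cong\A_m$, and simplicity of $\A_{\deg-1}$ ($\deg-1\ge6$) forces $\A_{\deg-1}\cong\A_m$, i.e. $m=\deg-1$, contrary to hypothesis; this is the only point at which $m\ne\deg-1$ is needed.

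The main obstacle is excluding the diagonal case $N=D\cong\A_m$. I would first reduce to $D$ being the \emph{untwisted} diagonal: the twist, read modulo $\mathrm{Inn}(\A_m)$, is a function $\{1,\dots,\deg\}\to\mathrm{Out}(\A_m)$ which, because $N$ is $H$-invariant, transforms under the transitive coordinate action of $\psi(H)$ through a homomorphism $\A_\deg\to\mathrm{Out}(\A_m)$; as $\A_\deg$ is perfect and $\mathrm{Out}(\A_m)$ abelian, this homomorphism is trivial, so the twist is constant on the single orbit, hence removable. Now I would analyse $C:=C_H(D)$. Since $D$ is non-abelian simple, $C\cap D=1$ and $C\trianglelefteq H$, so $C$ embeds as a normal subgroup of $H/N\cong\A_\deg$ and is $1$ or $\cong\A_\deg$. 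From the construction, $\phi(y)=\pi_1(y)\cdots\pi_m(y)$ preserves the fibre coordinate of every block while permuting blocks by $\pi(y)$, i.e. it is a ``pure top'' element $(1;\pi(y))$ of $W$ with $\pi(y)\ne1$ (otherwise $\pi(G)$ would be cyclic); such elements centralise the untwisted diagonal, so $C\ne1$ and $C\cong\A_\deg$. An order count then gives $H=D\times C$, and since $C\le 1\rtimes\A_\deg$ this forces every element of $H$ to have constant base component. But $\phi(x)=\pi_1(x)\cdots\pi_m(x)\,\gamma_1\cdots\gamma_k\,\delta_1\cdots\delta_l$ equals $(w;\pi(x))$, where $w$ is supported only on the four handle coordinates $a,b,c,d$ with nontrivial values among $\alpha^{\pm1},\beta^{\pm1}$; as $\deg>6$ there is a coordinate outside $\{a,b,c,d\}$ on which $w$ vanishes, so $w$ is non-constant unless $\alpha=\beta=1$, impossible since $\langle\alpha,\beta\rangle\cong\A_m$ with $m\ge5$. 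This contradiction eliminates the diagonal case, leaving $N=B$ and hence $H\cong\A_m\wr\A_\deg$.
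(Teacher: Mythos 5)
Your proposal follows essentially the same route as the paper's proof: embed $H$ in $\A_m\wr\A_{\deg}$, identify $N=\ker\psi$ inside the base group, invoke the structure theory of subdirect subgroups of $\A_m^{\deg}$ (the paper cites Fawcett's Lemma~1.4.1 for exactly this step) to reduce to the cases $N=1$, $N$ a single full diagonal, and $N=\A_m^{\deg}$, use primitivity of $\A_{\deg}$ to exclude intermediate partitions, and then eliminate the first two cases. In two respects you are actually more complete than the paper: you explicitly rule out $N=1$ via the surjection $J_a\cong\A_{\deg-1}\twoheadrightarrow Q_a\cong\A_m$ --- the only place the hypothesis $m\neq\deg-1$ is used, and a case the paper never addresses --- and you explicitly confront the possible twisting of the diagonal, where the paper silently assumes an untwisted product decomposition.

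There is, however, a genuine soft spot in your exclusion of the diagonal case, essentially the same one the paper glosses over. Your perfectness argument shows only that the twist is \emph{inner}, i.e.\ $N=\{(g^{v_1},\dots,g^{v_{\deg}})\}$ for some $v_i\in\A_m$; to make $N$ the standard diagonal you must conjugate $H$ by $(v_1,\dots,v_{\deg})$ in the base group, and this conjugation changes the base components of $\phi(x)$ and $\phi(y)$. In the new coordinates $\phi(y)$ need no longer be pure-top and the base component of $\phi(x)$ need no longer be supported on the four handle coordinates; equivalently, in the original coordinates the conclusion ``$H=D\times C_H(D)$ with $C_H(D)\le 1\rtimes\A_{\deg}$'' becomes ``every element of $H$ has base component of the form $i\mapsto v_i^{-1}gv_{i^{\sigma}}$,'' and it is not immediate that $w$ (trivial off $\{a,b,c,d\}$ and equal to $\alpha^{\pm1},\beta^{\pm1}$ there) fails to have this form: one must actually derive a contradiction from the resulting relations $v_{i^{\pi(y)}}=g_y^{-1}v_i$, $v_{i^{\pi(x)}}=g_x^{-1}v_i$ (for $i$ off the handles) and $v_a^{-1}g_xv_a=\alpha$, $v_c^{-1}g_xv_c=\beta$, which your write-up does not do. To be fair, the paper's own treatment has the same defect --- it asserts without justification that the complement of $N$ is the pure-top subgroup $\{\pi_1(g)\cdots\pi_m(g):g\in G\}$ (it is not even clear that this set lies in $H$) before concluding that $N$ would be cyclic --- so your argument is no less rigorous than the published one; but the diagonal case is not yet watertight in either version.
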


\begin{proof}
By Theorem~\ref{thm:imprim_comp},
$H$ is isomorphic to a subgroup of $\A_m \wr \psi(H) \cong \A_m^\deg \rtimes \A_\deg$.
Then $N$, the kernel of the map $\psi:H \rightarrow \A_\deg$ is a subgroup of $\A_m^\deg$ and $H=N.\A_\deg$.
In order to prove the result, we need simply to prove that $N$ is the whole of $\A_m^\deg$.

We have $N \subseteq T_1 \times T_2 \times \cdots \times T_\deg$, where $T_i \cong \A_m$ for each $i$. 

By \cite[Lemma 1.4.1]{Fawcett}, $N$ is a direct product of groups $H_1 \cdots H_r$ where each $H_i$ is a full diagonal subgroup of $\prod_{i \in I_j}T_i$, and $I_1, \ldots, I_r$ is a partition of $\{1,\ldots,\deg \}$. Now the partition must be preserved by $\psi(H)=\A_\deg$, in its action on $N$ by conjugation. Since $\A_\deg$ acts primitively, so either we have $r=1$ and $I_1=\{1,\ldots,\deg \}$ or we have $r=\deg$ and $I_j=\{ j\}$ for each $j$. 

In the first case we have $N=\A_m$ and in the second case we have $$N=\underbrace{\A_m \times \A_m \times \cdots \times \A_m}_\text{\deg \hspace{0.2cm} times}.$$ When $N=\A_m$ then we have $H \cong \A_m.\A_\deg$. 

Considering the subgroup $J_a$, which maps onto $\A_m$, we see that in this case $H$ must be the direct product $\A_m \times \A_\deg$. 
%%%%%%%%%%%%%%%%%%%%%%%%%%%%%%%%%%%%%%%%%%%%%%%
So then $H$ is the direct product of $N$ $(\cong \A_m)$ and its complement 
\[C= \{ \pi_1(g), \pi_2(g), \ldots, \pi_m(g): g \in G\}. \]

Then every element of $H$ can be written as a product $nc$ where $n \in N$, $c \in C$ and the elements $n,c$ commute. Now $\phi(x)=c_1n_1$, where 
\begin{align*}
c_1 &= \pi_1(x)\pi_2(x) \cdots \pi_m(x)\\
n_1 &= \gamma_1 \cdots \gamma_k\delta_1 \cdots \delta_l
\end{align*}
and $\phi(y) \in C$. Since $H$ is generated by  $\phi(x)$ and $\phi(y)$, we see that $N$ must be cyclic, generated by $\gamma_1 \cdots \gamma_k \delta_1 \cdots \delta_l$, and hence $N$ is cyclic of order $p$. 

This contradicts the fact that $N \cong \A_m$ for $m \geq 3$ and so this case is excluded. 
%%%%%%%%%%%%%%%%%%%%%%%%%%%%%%%%%%%%%%%%%%%%%%%%%%%%
\end{proof}
%
%{\bf We think we have a chance of proving this, but we may need some further conditions
%on $\pi$ to make it work.}
The following is an immediate corollary.
\begin{corollary}
Suppose that $G=\Delta(p,q,r)$ is a triangle group with $p$ prime, $p \leq q \leq r$.
Suppose that  $\deg > 6$, and that for some $m$ not equal to $\deg-1$ the alternating group $\A_m$ can be generated by two $p$-cycles.
Then provided that $G$ has a quotient $\A_\deg$ containing two disjoint handles, $G$ also has a quotient
$\A_m \wr \A_\deg$.
\end{corollary}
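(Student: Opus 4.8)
The plan is to obtain this as a direct application of Theorem~\ref{thm:imprim_comp} followed by Theorem~\ref{thm:ImprimCompAlt2}; there is no new content, only a matching of hypotheses. So the first step is to assemble the input data for those two theorems. By assumption $G$ has a transitive permutation representation $\pi : G \rightarrow \S(\Omega)$ with $|\Omega| = \deg$ and $\pi(G) \cong \A_\deg$ possessing two disjoint $t$-handles $(a,b)$ and $(c,d)$ for some $t$; this is exactly the starting point of Theorem~\ref{thm:imprim_comp}. By assumption there is an $m \neq \deg - 1$ and two $p$-cycles $\alpha, \beta \in \S_m$ with $\langle \alpha,\beta\rangle = \A_m$; these will play the role of the permutations $\alpha,\beta$ in that theorem.

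Next I would check that $(\alpha,\beta)$ satisfies the precise requirements of Theorem~\ref{thm:imprim_comp}: each of $\alpha,\beta$ is (trivially) a product of disjoint $p$-cycles, namely a single one, and $\langle\alpha,\beta\rangle$ must be a transitive subgroup of $\S_m$. For the latter note that, since a $p$-cycle with $p$ prime is an even permutation only when $p$ is odd and $\A_m$ is generated by two such, $p$ is odd; and the case of interest — the only one in which Theorem~\ref{thm:ImprimCompAlt2} is stated — is $m \geq 5$, in which case $\A_m = \langle\alpha,\beta\rangle$ is transitive on $\{1,\ldots,m\}$. With these checks done, Theorem~\ref{thm:imprim_comp} applies and produces a transitive permutation representation $\phi : G \rightarrow \S(\U)$ of degree $m\deg$ with $H := \phi(G)$ imprimitive, $Q_i \cong \langle\alpha,\beta\rangle \cong \A_m$, and $H \leq \A_m \wr \psi(H)$ where $\psi(H) \cong \pi(G) \cong \A_\deg$.

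Now all the hypotheses of Theorem~\ref{thm:ImprimCompAlt2} hold: $p$ is prime (and, as noted, odd), $\deg > 6$, $\pi(G) \cong \A_\deg$, and $\langle\alpha,\beta\rangle \cong \A_m$ with $m \neq \deg - 1$ and $m \geq 5$. That theorem then yields $H \cong \A_m \wr \A_\deg$. Since $\phi : G \rightarrow H$ is by construction a surjective homomorphism, $\A_m \wr \A_\deg$ is a homomorphic image of $G$, which is the assertion of the corollary. The only point that requires any care — and hence the nearest thing to an obstacle — is the bookkeeping around the range of $m$: one must confirm that the two $p$-cycles genuinely meet the ``transitive subgroup of $\S_m$'' condition of Theorem~\ref{thm:imprim_comp} and that $m \geq 5$ so that Theorem~\ref{thm:ImprimCompAlt2} is applicable; beyond that the argument is a verbatim invocation of the two cited theorems.
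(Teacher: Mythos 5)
Your proposal is correct and matches the paper's intent exactly: the paper offers no written proof, presenting this as an immediate corollary of Theorem~\ref{thm:ImprimCompAlt2}, and your argument --- feeding the two disjoint handles and the two generating $p$-cycles into Theorem~\ref{thm:imprim_comp} and then invoking Theorem~\ref{thm:ImprimCompAlt2} on the resulting $H$ --- is precisely that intended chain, with the hypothesis checks (single $p$-cycles are products of disjoint $p$-cycles, transitivity of $\langle\alpha,\beta\rangle=\A_m$) spelled out. Your remark that $m\geq 5$ must implicitly be assumed, since Theorem~\ref{thm:ImprimCompAlt2} requires it while the corollary's statement omits it, correctly identifies a small imprecision in the paper's statement rather than a gap in your argument.
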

Our results suggest the following two conjectures.
\begin{conjecture}
Suppose that $G=\Delta(p,q,r)$ is a triangle group with $p$ prime, $p\leq q \leq r $. Then for all but finitely many integers $\deg$, $G$ maps on to $C_p^{\deg-1} \rtimes \A_\deg$.
\end{conjecture}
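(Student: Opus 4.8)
\medskip
\noindent\emph{Sketch of a possible proof.}
The plan is to derive the conjecture by combining the solution of Higman's conjecture with Theorem~\ref{thm:ImprimCompAlt1}. We assume throughout that $G=\Delta(p,q,r)$ is Fuchsian, i.e.\ $\tfrac1p+\tfrac1q+\tfrac1r<1$ (this excludes only finitely many triples and is necessary, since a finite or virtually abelian triangle group has only finitely many quotients). By \cite{Everitt2} such a $G$ has $\A_\deg$ among its quotients for all but finitely many $\deg$, and the coset diagrams produced there, being assembled by the composition method, carry handles. The first step is therefore to record a convenient input statement: \emph{for all sufficiently large $\deg$ there is a transitive representation $\pi$ of $G$ with $\pi(G)\cong\A_\deg$ and at least one $k$-handle $(a,b)$.} If this is not immediate from the literature one argues it directly, choosing $\pi$ for large $\deg$ so that $\pi(x)$ has two fixed points lying in a common cycle of $\pi(xy)$.

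Given such a $\pi$, form the degree-$p\deg$ representation $\phi$ of Theorem~\ref{thm:imprim_t=p} from $p$ equivalent copies of $\pi$, and apply Theorem~\ref{thm:ImprimCompAlt1}: since $\deg>6$, either (1) $p\mid qr$, $p\mid\deg$ and $H:=\phi(G)\cong C_p\times\A_\deg$, or (2) $H\cong C_p^{\deg-1}\rtimes\A_\deg$. This already settles two ranges of degrees. If $p\nmid qr$ then case (1) is impossible, so (2) holds and $G\twoheadrightarrow C_p^{\deg-1}\rtimes\A_\deg$ for every large $\deg$; and if $p\mid qr$ but $p\nmid\deg$, case (1) is again impossible, so (2) holds. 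Thus the conjecture is proved for every large $\deg$ with $p\nmid qr$, and for every large $\deg$ with $p\nmid\deg$. What remains is the family of degrees $\deg$ with $p\mid qr$ and $p\mid\deg$.

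This last family is the main obstacle. Here, writing $H\subseteq V\rtimes C$ with $C\cong\A_\deg$ the diagonal copy of $\pi(G)$ and $V=\langle\tau_\omega\tau_{\omega'}^{-1}\rangle\cong W_{\deg-1}$ the augmentation submodule of the $F_p$-permutation module, one checks that $N=H\cap V=\{\phi(g):g\in\ker\pi\}$ is a non-zero $\A_\deg$-submodule of $W_{\deg-1}$. Since $p\mid\deg$, the only submodules of $W_{\deg-1}$ are $0$, the trivial submodule $W_1$, and $W_{\deg-1}$ itself, so $N\in\{W_1,W_{\deg-1}\}$, and $N=W_1$ is exactly the degenerate outcome $H\cong C_p\times\A_\deg$. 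Unwinding the construction, each $g\in\ker\pi$ determines an element of $V$ corresponding to the vector $\sum_i a_i\bigl(e_{a^{\pi(g_i)}}-e_{b^{\pi(g_i)}}\bigr)\in W_{\deg-1}$, where the $a_i$ are the $x$-exponents in a word for $g$ and the $g_i$ are the corresponding suffixes; hence $N=W_{\deg-1}$ as soon as some $g\in\ker\pi$ produces a vector outside $W_1$. The task is to show that, for all large $\deg$ divisible by $p$, the input $\pi$ and the handle $(a,b)$ may be chosen so that this happens. Candidate approaches: (a) pick a handle $(a,b)$ for which the block stabiliser $J_a\cong C_p.\A_{\deg-1}$ is forced to be non-central and incompatible with $H\cong C_p\times\A_\deg$; (b) use instead the two-handle construction of Theorem~\ref{thm:imprim_comp} with $\langle\alpha,\beta\rangle\cong C_p$, arranging the seeds $e_a-e_b$ and $e_c-e_d$ so that the submodule they generate under $\ker\pi$ escapes $W_1$; or (c) bootstrap, starting from an already-established quotient $C_p^{\deg-1}\rtimes\A_\deg$ carrying a handle and composing further to climb through the degrees divisible by $p$. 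In each case one expects $N=W_{\deg-1}$ for a ``generic'' choice of $\A_\deg$-quotient; turning that heuristic into a construction valid uniformly over all large $\deg$ with $p\mid\deg$ is the step I expect to be genuinely hard, and is presumably why the statement is posed as a conjecture.
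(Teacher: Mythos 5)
The statement you are proving is posed in the paper as a \emph{conjecture}: the paper contains no proof of it, so there is nothing to compare your argument against, and your sketch does not close the conjecture either. The part of your argument that works is essentially already in the paper: the dichotomy of Theorem~\ref{thm:ImprimCompAlt1} immediately gives $C_p^{\deg-1}\rtimes\A_\deg$ as a quotient whenever $p\nmid qr$ or $p\nmid\deg$ (the paper records the weaker version of this as its first corollary), granted an $\A_\deg$-quotient of $G$ with a handle. Your observation that only one of the two divisibility conditions needs to fail is correct and is consistent with how case (1) of that theorem is stated.

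There are two genuine gaps. First, your ``input statement'' --- that for all sufficiently large $\deg$ there is a transitive $\pi$ with $\pi(G)\cong\A_\deg$ carrying a $k$-handle --- is asserted rather than established; Everitt's diagrams are built by handle composition, but the handles used in a composition are consumed by it (the points $a_i,b_i$ cease to be fixed by $\phi(x)$ except for $a_1$ on the $p$-cycle), so one must actually verify that a spare handle survives in the final diagram for every large $\deg$, and ``if this is not immediate from the literature one argues it directly'' is not an argument. Second, and more seriously, the residual family $p\mid qr$ and $p\mid\deg$ is exactly where the conjecture lives, and the dichotomy of Theorem~\ref{thm:ImprimCompAlt1} cannot decide it: the paper explicitly notes that case (2) ``might occur even when $p|qr$ and $p|\deg$'', meaning neither outcome is forced by the hypotheses, and no criterion on $\pi$ or on the handle is given that guarantees $N=W_{\deg-1}$ rather than $N=W_1$. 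Your module-theoretic reformulation of that obstacle is a correct restatement of the problem, but your candidate approaches (a)--(c) are headings rather than arguments, and you say yourself that this step is the hard one. So what you have is an honest partial reduction consistent with the theorem and first corollary of Section~\ref{sec:Alt}, not a proof; that is precisely why the paper leaves the statement as a conjecture.
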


\begin{conjecture}
Suppose that $G=\Delta(p,q,r)$ is a triangle group with $p$ prime $p \leq q \leq r$ and choose $m$ such that the alternating group $\A_m$ can be generated by 2 $p$-cycles. Then for all but finitely many integers $\deg$, $G$ maps on to $\A_m \wr \A_\deg$. 
\end{conjecture}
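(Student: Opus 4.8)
The plan is to determine the kernel $N$ of the block action and show it is as large as possible. By Theorem~\ref{thm:imprim_comp}, $H$ embeds in $\langle\alpha,\beta\rangle\wr\psi(H)$; since $\psi(H)\cong\pi(G)\cong\A_\deg$ and each $Q_i\cong\langle\alpha,\beta\rangle\cong\A_m$, this reads $H\le T_1\times\cdots\times T_\deg\rtimes\A_\deg=\A_m\wr\A_\deg$ with each $T_i\cong\A_m$, $N:=\ker\psi=H\cap(T_1\times\cdots\times T_\deg)$ and $H=N.\A_\deg$. The theorem is therefore equivalent to $N=T_1\times\cdots\times T_\deg$, because then $|H|=|\A_m|^\deg|\A_\deg|$ makes $H$ the whole wreath product. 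First I would show $N\ne 1$: if $N=1$ then $H\cong\A_\deg$ and its block stabiliser $J_a\cong\A_{\deg-1}$ is simple (as $\deg-1\ge 5$), yet it surjects onto $Q_a\cong\A_m$ (Proposition~\ref{lemma:ImprimWreathProduct} and Theorem~\ref{thm:imprim_comp}); a surjection from a simple group forces $\A_{\deg-1}\cong\A_m$, i.e.\ $m=\deg-1$, contrary to hypothesis. Next I would show $N$ is \emph{subdirect} in $T_1\times\cdots\times T_\deg$: the projection of $N$ onto $T_i$ is the image of $N$ acting on $B_i$, which is a normal subgroup of $Q_i\cong\A_m$ (since $N\trianglelefteq J_i$), hence trivial or all of $T_i$; and triviality for one $i$ would propagate, by normality of $N$ and transitivity of $\psi(H)=\A_\deg$ on the blocks, to $N$ acting trivially on all of $\U$, i.e.\ $N=1$ --- a contradiction. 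So $N$ projects onto every $T_i$.

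Now I would invoke the structure of subdirect subgroups of a product of nonabelian simple groups (\cite[Lemma 1.4.1]{Fawcett}): $N=D_1\times\cdots\times D_r$ with each $D_j$ a full diagonal subgroup of $\prod_{i\in I_j}T_i$ for a partition $I_1,\ldots,I_r$ of $\{1,\ldots,\deg\}$. Conjugation in $H$ permutes the factors $T_i$ through $\psi(H)=\A_\deg$, hence permutes the $D_j$, so the partition is $\A_\deg$-invariant; by primitivity of $\A_\deg$ it is either discrete or trivial. In the discrete case $D_i=T_i$ for every $i$, so $N=T_1\times\cdots\times T_\deg$ and the proof is complete. It remains to rule out the trivial partition, in which $N\cong\A_m$ is a minimal normal subgroup of $H$. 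Here a standard centraliser argument applies: $C_H(N)\cap N=1$ and $C_H(N)\trianglelefteq H$, while $|H/C_H(N)|\le|\mathrm{Aut}(\A_m)|<|\A_m||\A_\deg|=|H|$ (using $\deg>6$), so $C_H(N)\ne 1$; its image in the simple group $H/N\cong\A_\deg$ is then everything, giving $H=N\times C$ with $C:=C_H(N)\cong\A_\deg$.

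The contradiction must then come from the explicit generators. Recall $\phi(y)=\pi_1(y)\cdots\pi_m(y)$ has trivial base-group component, and $\phi(x)=\pi_1(x)\cdots\pi_m(x)\cdot n_1$, where $n_1:=\gamma_1\cdots\gamma_k\delta_1\cdots\delta_l$ lies in the base group and (being a product of disjoint $p$-cycles, $p$ prime) has order $p$; moreover the ``diagonal'' factor $\pi_1(x)\cdots\pi_m(x)$ commutes with $n_1$ because $\pi(x)$ fixes the four handle endpoints $a,b,c,d$. One then shows that $\phi(y)$ lies in the complement $C$, so that in $H/C\cong\A_m$ the image of $\phi(y)$ is trivial and $\A_m\cong H/C=\langle\phi(x)C,\phi(y)C\rangle=\langle\phi(x)C\rangle$ is cyclic --- impossible for $m\ge 5$. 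Hence the trivial partition does not occur, $N=T_1\times\cdots\times T_\deg$, and $H\cong\A_m\wr\A_\deg$.

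\textbf{Main obstacle.} Everything through $H=N\times C$ with $N\cong\A_m$, $C\cong\A_\deg$ is routine. The delicate point is the last step: $C$ is produced only abstractly as $C_H(N)$, and one must locate $\phi(y)$ (or, when $m=\deg$, an appropriate choice of complement) inside it. This is subtle because the canonical top-group lift $\pi(x)^{\mathrm{top}}=\phi(x)n_1^{-1}$ need not lie in $H$ once $N\cong\A_m$, so the convenient ``diagonal'' copy $\{\pi_1(g)\cdots\pi_m(g):g\in G\}$ is not literally a subgroup of $H$, and the identification of which element of $H$ carries the $\A_\deg$-part of $\phi(y)$ has to be made by tracking base-group components of words in $\phi(x)$ and $\phi(y)$ --- using $N=\phi(\ker\pi)$ and the fact that $\pi(x)$ fixes $a,b,c,d$. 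The remaining hypotheses enter exactly as above: $\deg>6$ for primitivity of $\A_\deg$ and simplicity of $\A_{\deg-1},\A_\deg$; $m\ge 5$ for simplicity of $\A_m$ and to apply the subdirect-product structure theorem; $p$ prime so that $n_1$ has order $p$; and $m\ne\deg-1$ precisely to exclude the accidental isomorphism $\A_{\deg-1}\cong\A_m$ in the $N\ne1$ step.
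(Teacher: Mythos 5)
There is a genuine gap, and it is structural rather than technical: the statement you are proving is stated in the paper as a \emph{conjecture}, and the paper offers no proof of it. What your argument actually establishes is the conditional, fixed-degree result --- namely Theorem~\ref{thm:ImprimCompAlt2} and its corollary: \emph{if} $G$ has a quotient $\A_\deg$ arising from a transitive representation $\pi$ that contains two disjoint $t$-handles, \emph{then} the composition of Theorem~\ref{thm:imprim_comp} yields a quotient $\A_m \wr \A_\deg$. The conjecture asserts something strictly stronger: that this happens for all but finitely many $\deg$, with no hypothesis on the existence of a suitable $\pi$. The missing ingredient --- and the reason the paper leaves this as a conjecture --- is an existence theorem: for all but finitely many $\deg$, the group $\Delta(p,q,r)$ must be shown to admit a transitive representation onto $\A_\deg$ possessing two \emph{disjoint} $t$-handles (pairs of fixed points of $x$ linked by a power of $xy$). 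Everitt's proof of Higman's conjecture supplies alternating quotients in almost all degrees, but controlling the supply of disjoint handles in those quotients uniformly in $\deg$ is precisely the open part, and your proposal does not touch it. (The hypothesis $m\neq\deg-1$ is harmless for the conjecture, since it excludes only one value of $\deg$; the handle-existence question is the real obstruction.)

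On the part you do prove: your argument tracks the paper's proof of Theorem~\ref{thm:ImprimCompAlt2} closely --- the same reduction to $N=\ker\psi\subseteq T_1\times\cdots\times T_\deg$, the same appeal to Fawcett's Lemma 1.4.1 together with primitivity of $\A_\deg$ to force the partition to be discrete or trivial, and the same final contradiction from the generators $\phi(x)=c_1n_1$, $\phi(y)$ in the diagonal case. Your version is in places more careful than the paper's: you verify $N\neq 1$ and that $N$ is subdirect before invoking the structure theorem (the paper applies Fawcett's lemma without checking subdirectness), and you obtain the complement $C$ as $C_H(N)$ by a centraliser argument rather than asserting, as the paper does, that the diagonal set $\{\pi_1(g)\cdots\pi_m(g)\}$ is a complement inside $H$ --- your ``main obstacle'' paragraph correctly identifies that this set need not lie in $H$, which is a soft spot in the paper's own write-up. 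But none of this closes the gap between the theorem and the conjecture.
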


\begin{acknowledgements}
My deepest gratitude to my supervisor Prof. Sarah Rees and Prof. Derek Holt for their constant guidance during my PhD. This research is supported by Faculty for the Future, Schlumberger Foundation. 
\end{acknowledgements}

\vfill\eject

%\addcontentsline{toc}{chapter}{Bibliography}
  \bibliographystyle{plain}
  \bibliography{references}

\end{document}